\newtheorem{prop}{Proposition}
\newtheorem{lemma}{Lemma}
\newtheorem{rem}{Remark}
\newcommand{\hsp}{\mathcal{H}^{\Gamma}}
\newcommand{\extg}{\mathcal{E}_{_{\Gamma}}}
\newcommand{\exts}{\mathcal{E}_{_{\Sigma}}}
\newcommand{\trg}{\gamma_{_{\Gamma}}}
\newcommand{\extgi}{\mathcal{E}_{_{\Gamma_i}}}
\newcommand{\extsi}{\mathcal{E}_{_{\Sigma_i}}}
\newcommand{\trgi}{\gamma_{_{\Gamma_i}}}
\begin{document}

\begin{frontmatter}

\title{3D-1D coupling on non conforming meshes via three-field optimization based domain decomposition}

\author[polito]{Stefano Berrone \corref{cor}}
\ead{stefano.berrone@polito.it}
\author[polito]{Denise Grappein}
\ead{denise.grappein@polito.it}
\author[polito]{Stefano Scial\`o}
\ead{stefano.scialo@polito.it}

\address[polito]{Dipartimento di Scienze Matematiche, Politecnico di Torino,
	Corso Duca degli Abruzzi 24, 10129 Torino, Italy. Member of INdAM research group GNCS.}
\cortext[cor]{Corresponding author}

\begin{abstract}
A new numerical approach is proposed for the simulation of coupled three-dimensional and one-dimensional elliptic equations (3D-1D coupling) arising from dimensionality reduction of 3D-3D problems with thin inclusions. The method is based on a well posed mathematical formulation and results in a numerical scheme with  high robustness and flexibility in handling geometrical complexities. This is achieved by means of a three-field approach to split the 1D problems from the bulk 3D problem, and then resorting to the minimization of a properly designed functional to impose matching conditions at the interfaces. Thanks to the structure of the functional, the method allows the use of independent meshes for the various subdomains.
\end{abstract}

\begin{keyword}
3D-1D coupling \sep three-field \sep domain-decomposition \sep non conforming mesh \sep optimization methods for elliptic problems 

\MSC[2010] 65N30 \sep 65N50 \sep 68U20 \sep 86-08

\end{keyword}

\end{frontmatter}

\section{Introduction} This work presents a new numerical approach to manage the coupling of three-dimensional and one-dimensional elliptic equations (3D-1D coupling). This kind of problems emerges, for example, in the numerical treatment of domains with small tubular inclusions: in these cases, indeed, it might result computationally convenient to approximate the small inclusions by one-dimensional (1D) manifolds, in order to avoid the building of a three-dimensional grid within the inclusion. Clearly this topological reduction can be a viable approach only if one-dimensional modeling assumptions can be applied to the problem at hand. Examples of application are: capillary networks exchanging flux with the surrounding tissue \cite{Notaro2016,Koppl2020}, the interaction of tree roots with the soil \cite{Schroder2012,Koch2018}, a system of wells for fluid in geological applications \cite{Gjerde2018a,Gjerde2020, CerLauZun2019}, or the modeling of fiber-reinforced materials \cite{Steinbrecher2020,Llau2016}.

The definition of coupling conditions between a three-dimensional (3D) and a 1D problem is not straightforward, as no bounded trace operator is defined in standard functional spaces on manifolds with a dimensionality gap higher than one. This problem was recently studied in \cite{Dangelo2012,DangeloQuarteroni2008}, where suitable weighed Sobolev spaces were introduced and a bounded trace operator from the 3D space to the 1D space was defined, thus allowing to formulate a well posed coupled problem, also resorting to the results in \cite{ErnGuermond}. In \cite{Tornberg2004} regularizing techniques are proposed for singular terms. Three dimensional problems with singular sources defined on lines are also studied in \cite{Gjerde2019}, where the nature of the irregularity is analyzed and a method based on the splitting of the solution in a low regularity part plus a regular correction is proposed. Problems with a source term on manifolds with high dimensionality gaps are also studied in \cite{Koppl2018}, where a lifting technique of the irregular datum is used to reduce the dimensionality gap. In \cite{Zunino2019} a 3D-1D coupled approach is derived starting from the fully 3D-3D coupled problem and applying a topological model reduction through the definition of proper averaging operators. 

In the present work a new numerical approach to this problem is proposed, starting from modeling assumptions similar to those proposed in \cite{Zunino2019}. A reduced 3D-1D model approximating the original equi-dimensional 3D-3D problem is here obtained by introducing proper assumptions on the solution inside the small inclusions and by defining suitable subspaces of the Sobolev spaces typically employed in the variational formulation of partial differential equations. Thanks to this, the problem is treated, in practice, as a 3D-1D reduced problem, but it can still be written as an equi-dimensional problem, thus skipping the difficulties related to the 3D-1D coupling. The problems in the bulk 3D domain and in the small inclusions are splitted resorting to a three-field based domain decomposition method, originally formulated in \cite{Brezzi} and already applied for domain decomposition in networks of fractures \citep{NostroArxiv}. Suitable matching conditions are then enforced at the interface to recover the solution on the whole domain. Here pressure continuity and flux conservation constraints are assumed at the interfaces. The advantages of the three-field based approach with respect to standard domain decomposition methods lie in the possibility of defining stable locally conservative numerical schemes on non conforming meshes \citep{NostroArxiv}. Matching conditions at the interfaces are enforced by means of a PDE constrained numerical scheme, already proposed for simulation of the flow in poro-fractured media \cite{BPSc,BSV,BDS} and here proposed for 3D-1D coupling. The method is based on the minimization of a cost functional expressing the error in the fulfillment of interface conditions, constrained by constitutive laws on the various subdomains. The advantages of such an approach lie in the possibility of enforcing continuity and flux balance at the interfaces using completely independent meshes on the sub-domains. This provides to the scheme an extreme flexibility and robustness to geometrical complexities, which are critical features for the applicability to real problems, where the nearly one dimensional inclusions might form complex networks. Exploiting the properties of the functional spaces chosen for the solution, the functional can be reduced to the centrelines of the tubular inclusions and used to control the continuity of the solution, whereas flux conservation is strongly enforced thanks to the three-field formulation.

The manuscript is organized as follows: in Section~\ref{not_and_form} notation is introduced and the strong formulation of the 3D-3D problem is presented, along with the hypotheses allowing its reduction to a 3D-1D problem. In Section~\ref{VarForm} the weak formulation of the 3D-1D coupled problem is worked out, while in Section~\ref{PDE_constr} the problem is re-written into a PDE-constrained optimization formulation. The corresponding discrete approach is discussed in Section~\ref{Discrete}. Finally, in Section~\ref{Num_res}, some numerical examples are described.

\section{Notation and problem formulation}\label{not_and_form}
Let us here briefly recall the basic formulation of the problem of interest in a simplified setting, the ideas here proposed being easily extendable to more general cases. We refer to \citep{Zunino2019} for a broader presentation of the problem.
Let us consider a convex domain $\Omega \in \mathbb{R}^3$ in which a generalized cylinder $\Sigma \in \mathbb{R}^3$ is embedded. The centreline of this cylinder is denoted by $\Lambda=\left\lbrace\bm{\lambda}(s), s \in (0,S) \right\rbrace$, where $\bm{\lambda}(s)$ is here assumed, for simplicity, to be a rectilinear segment in the three-dimensional space. The symbol $\Sigma(s)$ denotes the transversal section of the cylinder at $s \in [0,S]$. We assume that each section, whose boundary is denoted by $\Gamma(s)$, has an elliptic shape, denoting by $R$ the maximum axes length of the ellipses as $s$ ranges in the interval $[0,S]$. The lateral surface of $\Sigma$ is $\Gamma=\left\lbrace \Gamma(s), ~s \in [0,S]\right\rbrace $, while $\Sigma_0=\Sigma(0)$ and $\Sigma_S=\Sigma(S)$ are the two extreme sections.  The portion of the domain that does not include the cylinder is denoted by $D=\Omega \setminus \Sigma$, with boundary $\partial D=\partial \Omega \cup \left\lbrace \Gamma\cup \Sigma_0 \cup \Sigma_S\right\rbrace $, where $\partial \Omega$ is the boundary of $\Omega$. We refer to $\partial D^e$ as the external boundary of $D$, coinciding with $\partial \Omega$ when the extreme sections of $\Sigma$ are inside $\Omega$. In case $\Sigma_0$ and $\Sigma_S$ lie on the boundary $\partial \Omega$ we define $\partial D^e=\partial \Omega \setminus \left\lbrace \Sigma_0 \cup \Sigma_S\right\rbrace$.

Let us consider, in $\Omega$, a diffusion problem, with unknown pressures $u$ in $D$ and $\tilde{u}$ in $\Sigma$:\medskip

\begin{minipage}{0.47\textwidth}
	\begin{center}
		\textbf{3D-problem on} $\bm{D}$:
		\begin{align}
		-&\nabla \cdot (\bm{K} \nabla u)=f & \text{in } D\label{eqOmega}\\[0.35em]
		&u=0 &\text{on } \partial D^e \label{cbOmega}\\
			&u_{|_{\Gamma}}=\psi &~\text{ on } \Gamma \label{psi_u}\\
		&\bm{K}\nabla u \cdot \bm{n}=\phi &~\text{on } \Gamma \label{phi_u}
		\end{align}		
	\end{center}
\end{minipage}
\begin{minipage}{0.47\textwidth}
	\begin{center}
		\textbf{3D-problem on} $\bm{\Sigma}$:
		\begin{align}
		-&\nabla \cdot(\tilde{\bm{K}}\nabla\tilde{u})=g&~\text{ in } \Sigma \label{eqSigma}\\
		&\tilde{u}=0 &\text{ on } \Sigma_0 \cup \Sigma_S \label{cbSigma}\\
			&\tilde{u}_{|_{\Gamma}}=\psi &~\text{ on } \Gamma \label{psi_utilde}\\
		&\tilde{\bm{K}}\nabla \tilde{u}\cdot \bm{\tilde{n}}=-\phi &~\text{on } \Gamma\label{phi_utilde}	
		\end{align}	
	\end{center}
\end{minipage}
\medskip

\noindent Vectors $\bm{n}$ and $\bm{\tilde{n}}$ are the outward pointing unit normal vectors to $\Gamma$, respectively for $D$ and $\Sigma$, such that $\bm{\tilde{n}}=-\bm{n}$, $\bm{K}$ and $\tilde{\bm{K}}$ are uniformly positive definite tensors in $D$ and $\Sigma$, respectively, and $f$ and $g$ are source terms. The symbol $\psi$ denotes the unknown unique value of the pressure on the interface $\Gamma$ while $\phi$ is the unknown flux through $\Gamma$, entering in $D$. Homogeneous Dirichlet boundary conditions are considered on $\partial D ^e$ and on $\Sigma_0$ and $\Sigma_S$, assumed to be part of $\partial \Omega$, the extension to more general cases being straightforward.
Equations (\ref{psi_u})-(\ref{phi_u}) and (\ref{psi_utilde})-(\ref{phi_utilde}) enforce pressure continuity and flux conservation constraints on the lateral surface $\Gamma$ of the cylinder, thus allowing us to couple the two problems. We remark that different coupling conditions could be considered, as the ones proposed in \cite{Zunino2019}.

If the cross-section-size $R$ of the cylinder $\Sigma$ becomes much smaller than the characteristic dimension of $\Omega$, a model can be introduced in order to reduce the computational cost of simulations. The key point is that, as $R\ll \text{diam}(\Omega)$, it is possible to assume that the variations of $\tilde{u}$ on the transversal sections of the cylinder are negligible, i.e., in cylindrical coordinates,
\begin{equation}
\tilde{u}(r,\theta,s)=\hat{u}(s) \quad\forall r\in [0,R],~ \forall \theta \in [0,2\pi). \label{hpcost}
\end{equation}
This allows us to simplify problem \eqref{eqSigma}-\eqref{cbSigma} reducing it to a 1D problem defined on the cylinder's centerline $\Lambda$ as
\begin{align}
&-\cfrac{d}{ds}\left( \tilde{\bm{K}}|\Sigma(s)|\cfrac{d\hat{u}}{ds}\right) =\tilde{g} & \text { for } s \in (0,S)\label{eqlambda}\\
&\hat{u}(0)=\hat{u}(S)=0,\label{cblambda}
\end{align}
where the new forcing term $\tilde{g}$ now accounts for the original volumetric source $g$ and for the incoming flux from the boundary $\Gamma$ of the equi-dimensional problem.
As the domain of problem \eqref{eqSigma}-\eqref{cbSigma} is reduced to a 1D segment, the domain of problem \eqref{eqOmega}-\eqref{cbOmega} is extended to $\Omega\setminus\Lambda$, thus obtaining a coupled 3D-1D problem. Details of this geometrical reduction are provided in the next session.
The clear advantage of the reduced problem is that solving a problem defined on a segment instead of a problem defined on a small cylinder is computationally much cheaper. Nevertheless it is not straightforward to write coupling conditions between \eqref{eqlambda}-\eqref{cblambda} and \eqref{eqOmega}-\eqref{cbOmega} that are analogous to \eqref{psi_u}-\eqref{phi_u} and \eqref{psi_utilde}-\eqref{phi_utilde}, as there is no bounded trace operator from $H^1(\Omega)$ to $L^2(\Lambda)$, being the dimensionality gap between the interested manifolds higher than one. 

In the next section an original formulation of the coupled 3D-1D problem is derived, starting from a variational formulation of the fully dimensional 3D-3D problem \eqref{eqOmega}-\eqref{cbOmega} and \eqref{eqSigma}-\eqref{cbSigma}, and defining the proper functional spaces and operators required to reduce this formulation to a well posed 3D-1D coupling.

\section{Variational formulation}\label{VarForm}
In this section we will adopt the following notation:
\begin{align*}
&H_0^1(D)=\left\lbrace v \in H^1(D) : v_{|_{\partial D^e}}=0 \right\rbrace\\ 
&H_0^1(\Sigma)=\left\lbrace v \in H^1(\Sigma): v_{|_{\Sigma_0}}=v_{|_{\Sigma_S}}=0\right\rbrace \\
&H_0^1(\Lambda)=\left\lbrace v \in H^1(\Lambda): v(0)=v(S)=0\right\rbrace 
\end{align*}
and we will suppose that $\partial D^e =\partial \Omega \setminus \left\lbrace \Sigma_0\cup \Sigma_S\right\rbrace$, i.e. the extreme sections of the cylinder lie on $\partial \Omega$.
Let us define a trace operator
\begin{equation}
\gamma_{_\Gamma}:H^1(D)\cup H^1(\Sigma)\rightarrow H^{\frac{1}{2}}(\Gamma),\text{ such that }\gamma_{_\Gamma}v=v_{|_\Gamma} ~\forall v \in H^1(D)\cup H^1(\Sigma)
\end{equation} and two extension operators 
\begin{equation*}
\mathcal{E}_{_\Sigma}: H^1(\Lambda) \rightarrow H^1(\Sigma) ~\text{  and  }~\mathcal{E}_{_\Gamma}: H^1(\Lambda) \rightarrow H^{\frac{1}{2}}(\Gamma)
\end{equation*}
defined such that, given $\hat{v} \in H_0^1(\Lambda)$, $\mathcal{E}_{_\Sigma}(\hat{v})$ is the extension of the point-wise value $\hat{v}(s)$, $s \in [0,S]$, to the cross section $\Sigma(s)$ of the cylinder and $\mathcal{E}_{_\Gamma}(\hat{v})$ the extension to the boundary $\Gamma(s)$ of the cross section. Let us observe that $\mathcal{E}_{_\Gamma}=\gamma_{_\Gamma}\circ \mathcal{E}_{_\Sigma}$.
Setting $\hat{V}=H_0^1(\Lambda)$, let us further consider the spaces:
\begin{align*}
&\widetilde{V}=\lbrace v \in H_0^1(\Sigma): v =\exts\hat{v}, ~\hat{v} \in \hat{V} \rbrace, \\
&\mathcal{H}^{\Gamma}=\lbrace v \in H^{\frac{1}{2}}(\Gamma): v =\extg\hat{v}, ~\hat{v} \in \hat{V} \rbrace,\\
&V_D=\left\lbrace v \in H_0^1(D): \gamma_{_\Gamma}v \in \mathcal{H}^{\Gamma}\right\rbrace.
\end{align*} 
such that $\widetilde{V}\subset H_0^1(\Sigma)$ contains functions that are extensions to the whole $\Sigma$ of functions in  $\hat{V}$, $\mathcal{H}^{\Gamma}\subset H^{\frac{1}{2}}(\Gamma)$ contains functions that are extensions to $\Gamma$ of functions in $\hat{V}$, and $V_D\subset H_0^1(D)$ only contains functions whose trace on $\Gamma$ is a function of $\mathcal{H}^{\Gamma}$.
The variational problem arising from the coupling of \eqref{eqOmega}-\eqref{cbOmega} and \eqref{eqSigma}-\eqref{cbSigma} through the continuity constraint can be written as: 
\textit{find} $(u, \tilde{u}) \in V_D \times \widetilde{V}$ \textit{such that}
\begin{align}
&(\bm{K}\nabla u, \nabla v)_{V_D}+(\bm{\tilde{K}}\nabla \tilde{u}, \nabla \tilde{v})_{\widetilde{V}}=(f,v)_{V_D}+(g,\tilde{v})_{\widetilde{V}} ~~&\forall (v,\tilde{v} )\in V_D\times \widetilde{V}\label{eq}\\
&\left\langle \gamma_{_\Gamma}u-\gamma_{_\Gamma}\tilde{u},\eta\right\rangle_{\mathcal{H}^{\Gamma},{\mathcal{H}^{\Gamma}}'} =0, \qquad&\forall\eta \in {\mathcal{H}^{\Gamma}}'\label{continuita}
\end{align}
\begin{rem}
Let us consider the space $\mathbb{V}=\left\lbrace (v,\tilde{v}) \in V_D \times\widetilde{V}:\gamma_{_\Gamma}v=\gamma_{_\Gamma}\tilde{v}\right\rbrace$. Then, problem (\ref{eq})-(\ref{continuita}) is equivalent to:
\textit{find} $(u, \tilde{u}) \in \mathbb{V}$ \textit{such that}
\begin{equation*}
	(\bm{K}\nabla u, \nabla v)_{V_D}+(\bm{\tilde{K}}\nabla \tilde{u}, \nabla \tilde{v})_{\widetilde{V}}=(f,v)_{V_D}+(g,\tilde{v})_{\widetilde{V}} ~~\forall (v,\tilde{v} )\in \mathbb{V}
\end{equation*}
The well-posedness of the problem easily follows from Lax-Milgram theorem, considering $||\cdot||_{\mathbb{V}}=||\cdot||_{H^1(D)}+||\cdot||_{H^1(\Sigma)}$
\end{rem}
Equation \eqref{eq} can be split into two coupled equations introducing the unknown flux $\phi$ through $\Gamma$, obtaining thus
\begin{align}
&(\bm{K}\nabla u, \nabla v)_{V_D}-\left\langle \phi,\gamma_{_\Gamma}v \right\rangle_{{\mathcal{H}^{\Gamma}}', {\mathcal{H}^{\Gamma}}}=(f,v)_{V_D}~&\forall v \in V_D,~\phi \in {\mathcal{H}^{\Gamma}}' \label{eq_var_u} \\
&(\bm{\tilde{K}}\nabla \tilde{u}, \nabla \tilde{v})_{\widetilde{V}}+\left\langle \phi,\gamma_{_\Gamma}\tilde{v} \right\rangle_{{\mathcal{H}^{\Gamma}}', {\mathcal{H}^{\Gamma}}}=(g,\tilde{v})_{\widetilde{V}} &\forall \tilde{v} \in \widetilde{V},~\phi \in {\mathcal{H}^{\Gamma}}' \label{eq_var_uhat}
\end{align}
Moreover, the continuity condition (\ref{continuita}) can be rewritten introducing a new variable $\psi \in \mathcal{H}^{\Gamma}$ as:
\begin{align}
&\left\langle \gamma_{_\Gamma}u-\psi,\eta\right\rangle_{\mathcal{H}^{\Gamma},{\mathcal{H}^{\Gamma}}'}=0&~\forall \eta \in  {\mathcal{H}^{\Gamma}}', \psi \in \mathcal{H}^{\Gamma} \label{condpsi_u}\\
&\left\langle \gamma_{_\Gamma}\tilde{u}-\psi,\eta\right\rangle_{\mathcal{H}^{\Gamma},{\mathcal{H}^{\Gamma}}'}=0&~\forall \eta \in  {\mathcal{H}^{\Gamma}}', \psi \in \mathcal{H}^{\Gamma}\label{condpsi_hat}.
\end{align}
The set of equations (\ref{eq_var_u}), (\ref{eq_var_uhat}), (\ref{condpsi_u}) and (\ref{condpsi_hat}) represent an application of the 
three-field formulation presented in \cite{Brezzi} and similarly applied in \cite{NostroArxiv}.\smallskip\\
Thanks to the assumptions on the introduced functional spaces, this problem can be reduced to a 3D-1D coupled problem without encountering the aforementioned issues in the definition of a trace operator. In fact we only need to use the trace operator $\gamma_{_\Gamma}(\cdot)$, which is well-posed as defined from a three-dimensional manifold to a two dimensional one. Let us observe that
\begin{equation*}
 \left\langle \phi,\gamma_{_\Gamma}v \right\rangle_{{\mathcal{H}^{\Gamma}}', {\mathcal{H}^{\Gamma}}}=\int_{\Gamma}\phi~\gamma_{_\Gamma}v~d\Gamma=\int_0^S\Big( \int_{\Gamma(s)}\phi~\gamma_{_\Gamma}v~dl\Big) ds \quad \forall v \in V_D
\end{equation*}
and let us denote by $\overline{\phi}(s)$ the mean value of $\phi$ on the border $\Gamma(s)$ of each section. As $v \in V_D$ we know that $\gamma_{_\Gamma}v \in \mathcal{H}^{\Gamma}$, i.e. $\exists \check{v} \in \hat{V}: \gamma_{_\Gamma}v=\extg\check{v}$. Thus
$\int_{\Gamma(s)}\trg v  ~dl=|\Gamma(s)|\check{v}(s)$
and
\begin{equation*}
\int_0^S\Big( \int_{\Gamma(s)}\phi~\gamma_{_\Gamma}v~dl\Big) ds=\int_0^S|\Gamma(s)|\overline{\phi}(s)\check{v}(s)~ds=\left\langle |\Gamma|\overline{\phi},\check{v}\right\rangle_{\hat{V}',\hat{V}},
\end{equation*}
where $|\Gamma(s)|$ is the section perimeter size at $s\in[0,S]$.
The same holds if we consider $ \left\langle \phi,\gamma_{_\Gamma}\tilde{v} \right\rangle_{{\mathcal{H}^{\Gamma}}', {\mathcal{H}^{\Gamma}}}$. As $\tilde{v} \in \widetilde{V}$ we know that $\exists \hat{v} \in \hat{V}: \tilde{v}=\exts\hat{v}$ and consequently $\gamma_{_\Gamma}\tilde{v}=\gamma_{_\Gamma}\exts\tilde{v}=\extg\hat{v}$, so that
\begin{equation*}
\left\langle \phi,\gamma_{_\Gamma}\tilde{v} \right\rangle_{{\mathcal{H}^{\Gamma}}', {\mathcal{H}^{\Gamma}}}=\left\langle |\Gamma|\overline{\phi},\hat{v}(s)\right\rangle_{\hat{V}',\hat{V}}.
\end{equation*}
Similarly, $\forall \eta \in{\mathcal{H}^{\Gamma}}'$ and $\forall \rho: \rho=\extg\hat{\rho}$ with $\hat{\rho} \in \hat{V}$, we can write
\begin{equation}
\left\langle \rho,\eta\right\rangle _{{\mathcal{H}^{\Gamma}}', {\mathcal{H}^{\Gamma}}}=\int_0^S\Big(\int_{\Gamma(s)}\rho\eta dl \Big) ds=\int_0^S|\Gamma(s)|\hat{\rho}(s)\overline{\eta}(s)ds=\left\langle \hat{\rho},|\Gamma|\overline{\eta}\right\rangle_{\hat{V}',\hat{V}}\label{rho_eta}
\end{equation}
where we have used 
$\int_{\Gamma(s)}\rho~dl=|\Gamma(s)|\hat{\rho}$
and $\overline{\eta}(s)$ is the mean value of $\eta$ on the border $\Gamma(s)$ of each section. Exploiting \eqref{rho_eta} we can rewrite conditions (\ref{condpsi_u}) and (\ref{condpsi_hat}) as
\begin{align*}
&\left\langle \gamma_{_\Gamma}u-\psi,\eta\right\rangle_{\mathcal{H}^{\Gamma},{\mathcal{H}^{\Gamma}}'}=\left\langle |\Gamma|(\check{u}-\hat{\psi}),\overline{\eta}\right\rangle_{\hat{V},\hat{V}'}=0\\
&\left\langle \gamma_{_\Gamma}\tilde u-\psi,\eta\right\rangle_{\mathcal{H}^{\Gamma},{\mathcal{H}^{\Gamma}}'}=\left\langle |\Gamma|(\hat{u}-\hat{\psi}),\overline{\eta}\right\rangle_{\hat{V},\hat{V}'}=0
\end{align*} where $\check{u},\hat{\psi}\in \hat{V}$ are such that $\gamma_{_\Gamma}u=\extg\check{u}$, $\psi=\extg\hat{\psi}$ and $\gamma_{_\Gamma}\tilde{u}=\gamma_{_\Gamma}\exts\tilde{u}=\extg\hat{u}$, as $\tilde{u} \in \widetilde{V}$. 
Finally let us observe that
\begin{equation*}
(\bm{\tilde{K}}\nabla \tilde{u}, \nabla \tilde{v})_{\widetilde{V}}=\int_{\Sigma}\bm{\tilde{K}}\nabla \tilde{u}\nabla\tilde{v}~d\sigma=\int_0^S\bm{\tilde{K}}|\Sigma(s)|\cfrac{d\hat{u}}{ds}~\cfrac{d\hat{v}}{ds}~ds
\end{equation*}
where $\hat{u},\hat{v} \in \hat{V}$ are such that $\tilde{u}=\exts\hat{u}$, $\tilde{v}=\exts\hat{v}$ and $|\Sigma(s)|$ is the section area at $s \in [0,S]$. If we now extend the space $V_D$ from $D$ to the whole region $\Omega$, denoting this extended space by $V$, problem  (\ref{eq_var_u})-(\ref{condpsi_hat}) can be reduced to a 3D-1D coupled problem as:
 \textit{Find $(u,\hat{u}) \in V\times\hat{V}$, $\overline{\phi}\in \hat{V}'$ and $\hat{\psi} \in \hat{V}$ such that:}
\begin{align}
&(\bm{{K}}\nabla u, \nabla v)_{V}-\left\langle |\Gamma|\overline{\phi},\check{v} \right\rangle_{\hat{V}',\hat{V}}=(f,v)_{V}  \quad~\forall v \in V, \check{v} \in \hat{V}: \gamma_{_\Gamma} v=\mathcal{E}_{_\Gamma}\check{v} \label{equaz1} \\
&\Big( \bm{\tilde{K}}|\Sigma|\frac{d\hat{u}}{ds},\frac{d\hat{v}}{ds}\Big)_{\hat{V}}+\left\langle |\Gamma|\overline{\phi},\hat{v} \right\rangle_{\hat{V}',\hat{V}}=(|\Sigma|\overline{\overline{g}},\hat{v})_{\hat{V}}\quad~ \forall \hat{v} \in \hat{V}\label{equaz2}\\[0.5em]
&\qquad\left\langle |\Gamma|(\check{u}-\hat{\psi}),\overline{\eta}\right\rangle_{\hat{V}',\hat{V}}=0 \qquad \gamma_{_\Gamma}u=\mathcal{E}_{\Gamma}\check{u},~\forall \overline{\eta} \in \hat{V}'\label{condiz1}\\
&\qquad\left\langle |\Gamma|(\hat{u}-\hat{\psi}),\overline{\eta}\right\rangle_{\hat{V}',\hat{V}}=0 \qquad\forall \overline{\eta} \in \hat{V}'\label{condiz2}
\end{align}
with $\overline{\overline{g}}(s)=\frac{1}{|\Sigma(s)|}\int_{\Sigma(s)}g~d\sigma$, being $g$ sufficiently regular.

\section{PDE-constrained optimization problem}\label{PDE_constr}
The fulfillment of conditions (\ref{condiz1}) and (\ref{condiz2}) can be obtained through the minimization of a cost functional. Since we want to formulate independent problems on the various sub-domains, in order to guarantee the well posedness of each problem independently from the imposed boundary conditions, we modify equations (\ref{equaz1})-(\ref{equaz2}) as follows:
\begin{align}
&(\bm{{K}}\nabla u, \nabla v)_{V}+\alpha(|\Gamma|\check{u},\check{v} )_{\hat{V}}-\left\langle |\Gamma|\overline{\phi},\check{v} \right\rangle_{\hat{V}',\hat{V}} =(f,v)_V + \alpha(|\Gamma|\hat{\psi},\check{v} )_{\hat{V}} \label{eq_stab_u}\\[-0.4em]
&\hspace{7cm}\forall v \in V, ~\check{v} \in \hat{V}: \trg v=\extg\check{v}, \nonumber\\
&\Big(\bm{\tilde{K}}|\Sigma|\cfrac{d\hat{u}}{ds},\cfrac{d\hat{v}}{ds}\Big)_{\hat{V}}+\hat{\alpha}(|\Gamma|\hat{u},\hat{v})_{\hat{V}}+\left\langle |\Gamma|\overline{\phi},\hat{v} \right\rangle_{\hat{V}',\hat{V}}=(|\Sigma|\overline{\overline{g}},\hat{v})_{\hat{V}}+\hat{\alpha}(|\Gamma|\hat{\psi},\hat{v})_{\hat{V}}\label{eq_stab_hat}\\[-0.4em]
&\hspace{8cm}\qquad\forall \hat{v} \in \hat{V}\nonumber.
\end{align}
with $\alpha, \hat{\alpha} > 0$ being arbitrary parameters. Let us now define the following functional
\begin{eqnarray}
	J(\overline{\phi},\hat{ \psi})&=&\cfrac{1}{2}\left( ||\gamma_{_\Gamma}u(\overline{\phi},\hat{ \psi})-\psi||_{\hsp}^2+||\gamma_{_\Gamma}\tilde{u}(\overline{\phi},\hat{ \psi})-\psi||_{\hsp}^2\right) \nonumber \\
&=&\cfrac{1}{2}\left( ||\gamma_{_\Gamma}u(\overline{\phi},\hat{ \psi})-\extg \hat{\psi}||_{\hsp}^2+||\gamma_{_\Gamma}\exts \hat{u}(\overline{\phi},\hat{ \psi})-\extg \hat{\psi}||_{\hsp}^2\right)	\label{functional}
\end{eqnarray}
to be minimized constrained by \eqref{eq_stab_u} and \eqref{eq_stab_hat}. In order to rewrite the PDE-constrained optimization problem in a compact form, we consider the linear operators
$A: V \rightarrow V'$, $\widehat{A}:\hat{V} \rightarrow \hat{V}'$, $B:\hat{V}' \rightarrow V'$, $\widehat{B}:\hat{V}' \rightarrow \hat{V}'$, $C:\hat{V} \rightarrow V'$ and $\widehat{C}:\hat{V}\rightarrow \hat{V}'$ such that:
\begin{align}
&\left\langle Au,v\right\rangle_{V',V}=(\bm{{K}}\nabla u, \nabla v)_{V}+\alpha(|\Gamma|\check{u},\check{v})_{\hat{V}} \quad v \in V, ~\check{v} \in \hat{V}: \trg v=\extg\check{v}\label{A} \\
&\left\langle \widehat{A}\hat{u},\hat{v}\right\rangle_{\hat{V}',\hat{V}}=\Big(\bm{\tilde{K}}|\Sigma|\cfrac{d\hat{u}}{ds},\cfrac{d\hat{v}}{ds}\Big)_{\hat{V}}+\hat{\alpha}(|\Gamma|\hat{u},\hat{v} )_{\hat{V}}\qquad \qquad \hat{v} \in \hat{V} \label{Ahat}
\end{align}
\begin{align}
&\left\langle B \overline{\phi},v \right\rangle _{V',V}=\left\langle|\Gamma|\overline{\phi},\check{v} \right\rangle_{\hat{V}',\hat{V}} &~v \in V, ~\check{v} \in \hat{V}:\trg v=\extg\check{v} \label{B}\\
&\left\langle \widehat{B} \overline{\phi},\hat{v} \right\rangle _{\hat{V}',\hat{V}}=\left\langle |\Gamma|\overline{\phi},\hat{v} \right\rangle_{\hat{V}',\hat{V}} &~\hat{v} \in \hat{V} \label{Bhat}\\[0.5em]
&\left\langle C \hat{\psi},v \right\rangle _{V',V}=\alpha(|\Gamma|\hat{\psi},\check{v})_{\hat{V}} &~v \in V, ~\check{v} \in \hat{V} :\trg v=\extg\check{v} \label{C}\\
&\left\langle \widehat{C} \hat{\psi},\hat{v} \right\rangle _{\hat{V}',\hat{V}}=\hat{\alpha}(|\Gamma| \hat{\psi},\hat{v})_{\hat{V}} &~\hat{v} \in \hat{V}. \label{Chat}
\end{align}
The respective adjoints will be denoted as $A^*: V \rightarrow V'$, $\widehat{A}^*:\hat{V} \rightarrow \hat{V}'$, $B^*:V \rightarrow \hat{V}$, $\widehat{B}^*:\hat{V}\rightarrow \hat{V}$, $C^*:V \rightarrow \hat{V}'$, $\widehat{C}^*:\hat{V} \rightarrow \hat{V}'$. Let us further define 
\begin{align}
&F\in V' \text{ s.t. } F(v)=(f,v)_{V}, &v \in V\\
&G\in\hat{V}' \text{ s.t. }G(\hat{v})=(|\Sigma|\overline{\overline{g}},\hat{v})_{\hat{V}}, &\hat{v} \in \hat{V}.\label{G}
\end{align}
Equations \eqref{eq_stab_u}-\eqref{eq_stab_hat} can thus be written as: 
\begin{align}
&Au-B\overline{\phi}-C\hat{\psi}=F \label{eq1}\\
&\widehat{A}\hat{u}+\widehat{B}\overline{\phi}-\widehat{C}\hat{\psi}=G.\label{eq2}
\end{align}
If we now consider the space $\mathbb{V}=V \times \hat{V}$ and we set $\mathcal{W}=(u,\hat{u}) \in \mathbb{V}$ and $\mathcal{V}=(v,\hat{v})\in \mathbb{V}$, we can introduce the following operators:
\begin{align*}
&\mathcal{A}:\mathbb{V} \rightarrow \mathbb{V}' \text{ s.t. } \mathcal{A}(\mathcal{W},\mathcal{V})=A(u,v)+\widehat{A}(\tilde{u},\tilde{v})\\
&\mathcal{B}:\hat{V}' \rightarrow \mathbb{V}' \text{ s.t. } \mathcal{B}(\overline{\phi},\mathcal{V})=B(\overline{\phi},v)-\widehat{B}(\overline{\phi},\hat{v})\\
&\mathcal{C}:\hat{V} \rightarrow \mathbb{V}' \text{ s.t. } \mathcal{C}(\hat{\psi},\mathcal{V})=C(\hat{\psi},v)+\widehat{C}(\hat{\psi},\hat{v})
\end{align*}
and the PDE-constrained optimization problem can be written as
\begin{align}
&\min_{(\overline{\phi},\hat{\psi})}J(\overline{\phi},\hat{\psi}) \text{ subject to } \label{minJ}\\
&\quad \mathcal{A}\mathcal{W}-\mathcal{B}\overline{\phi}-\mathcal{C}\hat{\psi}=\mathcal{F}, \label{eq_compatta}
\end{align}
with $\mathcal{F}\in \mathbb{V}' \text{ s.t. } \mathcal{F}(\mathcal{V})=F(v)+G(\hat{v})$.
\begin{prop}
	Let us consider the trace operator $\gamma_{_\Gamma}:V\rightarrow \hsp$ and the extension operators $\mathcal{E}_{_\Sigma}:\hat{V} \rightarrow \tilde{V}\subset V$ and $\mathcal{E}_{_\Gamma}=\gamma_{_\Gamma}\circ \mathcal{E}_{_\Sigma}:\hat{V}\rightarrow \hsp$, whose respective adjoints are $\gamma_{_\Gamma}^*:{\hsp}'\rightarrow V'$, ${\exts}^*:\tilde{V}'\rightarrow \hat{V}'$ and ${\extg}^*:{\hsp}'\rightarrow \hat{V}'$ and let $\Theta_{\hat{V}}:\hat{V}\rightarrow\hat{V}'$ and $\Theta_{\hsp}:\hsp\rightarrow {\hsp}'$ be  Riesz isomorphisms. Then the optimal control $(\overline{\phi},\hat{\psi})$ that provides the solution to \eqref{minJ}-\eqref{eq_compatta} is such that
	\begin{align}
	 &\Theta_{\hat{V}}(B^*p-\widehat{B}^*\hat{p})=0\label{dphi}\\
	 &\Theta_{\hat{V}}^{-1}(C^*p+\widehat{C}^*\hat{p}-{\extg}^*\Theta_{\hsp}(\trg u(\overline{\phi},\hat{\psi})+\extg \hat{u}(\overline{\phi},\hat{\psi})-2\extg \hat{\psi}))=0\label{dpsi}
	 \end{align}
	 where $p \in V$ and $\hat{p} \in \hat{V}$ are the solutions respectively to
	 \begin{align}
	 &A^*p=\gamma_{_\Gamma}^*\Theta_{\hsp}(\gamma_{_\Gamma}u(\overline{\phi},\hat{\psi})-\extg\hat{\psi})\label{p}\\
	 &\widehat{A}^* \hat{p}={\extg}^*\Theta_{\hsp}(\extg\hat{u}(\overline{\phi},\hat{\psi})-\extg\hat{\psi})\label{phat}
	 \end{align}
	 \label{prop1}
\end{prop}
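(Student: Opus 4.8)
The plan is to derive the first-order optimality (KKT) conditions for the equality-constrained minimization problem \eqref{minJ}--\eqref{eq_compatta} by the standard Lagrangian/adjoint technique. First I would rewrite the cost functional $J$ as a function of the control $(\overline{\phi},\hat\psi)$ alone by substituting the solution operator of the constraint: from \eqref{eq_compatta}, $\mathcal{W}=\mathcal{W}(\overline{\phi},\hat\psi)$ depends affinely on the control, so that $u(\overline{\phi},\hat\psi)$ and $\hat u(\overline{\phi},\hat\psi)$ (equivalently $\tilde u=\exts\hat u$) are affine in $(\overline{\phi},\hat\psi)$, and hence $J$ is quadratic. The stationarity condition is then $J'(\overline{\phi},\hat\psi)=0$ in $\hat V'\times\hat V$; equivalently, introducing a Lagrange multiplier (adjoint state) $(p,\hat p)\in V\times\hat V$ for the constraint \eqref{eq_compatta}, one writes the Lagrangian $\mathcal{L}=J+\langle \mathcal{A}\mathcal{W}-\mathcal{B}\overline{\phi}-\mathcal{C}\hat\psi-\mathcal{F},(p,\hat p)\rangle$ and sets all partial derivatives to zero.

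Next I would compute the three blocks of derivatives. Differentiating $\mathcal{L}$ with respect to the state $\mathcal{W}=(u,\hat u)$ gives the adjoint equations: the derivative of $J$ with respect to $u$ is, by the chain rule through $\gamma_{_\Gamma}$ and the definition of $\|\cdot\|_{\hsp}$ via the Riesz map $\Theta_{\hsp}$, the term $\gamma_{_\Gamma}^*\Theta_{\hsp}(\gamma_{_\Gamma}u-\extg\hat\psi)$, and balancing it against $\langle \mathcal{A}\,\cdot\,,(p,\hat p)\rangle$ yields $A^*p=\gamma_{_\Gamma}^*\Theta_{\hsp}(\gamma_{_\Gamma}u(\overline{\phi},\hat\psi)-\extg\hat\psi)$, i.e. \eqref{p}; the analogous computation in the $\hat u$ component, using $\gamma_{_\Gamma}\exts\hat u=\extg\hat u$ and $\widehat{A}$ as defined in \eqref{Ahat}, gives \eqref{phat}. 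The two terms $\|\gamma_{_\Gamma}u-\psi\|^2$ and $\|\gamma_{_\Gamma}\tilde u-\psi\|^2$ in $J$ decouple cleanly into the $V$-equation and the $\hat V$-equation because the first depends on the state only through $u$ and the second only through $\hat u$. Then, differentiating $\mathcal{L}$ with respect to the controls $\overline{\phi}$ and $\hat\psi$ and using the definitions of $\mathcal{B}$, $\mathcal{C}$ (hence of $B,\widehat B,C,\widehat C$ and their adjoints) together with the explicit $\hat\psi$-dependence of $J$ through the $-2\extg\hat\psi$ terms, produces the gradient equations: the $\overline{\phi}$-derivative gives $B^*p-\widehat B^*\hat p=0$, and the $\hat\psi$-derivative gives $C^*p+\widehat C^*\hat p-{\extg}^*\Theta_{\hsp}(\gamma_{_\Gamma}u+\extg\hat u-2\extg\hat\psi)=0$. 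Pre-composing with the Riesz isomorphism $\Theta_{\hat V}$ (resp. $\Theta_{\hat V}^{-1}$) to land in the right space gives exactly \eqref{dphi}--\eqref{dpsi}.

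The main bookkeeping obstacle is keeping the duality pairings and the Riesz identifications consistent: $J$ is written with $\hsp$-norms, so its Fréchet derivative naturally lives in $\hsp$ and must be transported to $({\hsp})'$ by $\Theta_{\hsp}$ before applying the adjoint trace/extension operators, and similarly the control-gradient, which is naturally an element of $\hat V'\times \hat V$ dual pairs, must be identified with an element of $\hat V\times\hat V'$ via $\Theta_{\hat V}$ so that setting it to zero makes sense. One must also check that the bilinear forms $A$ and $\widehat A$ of \eqref{A}--\eqref{Ahat} are coercive (which holds because $\bm K,\tilde{\bm K}$ are uniformly positive definite and $\alpha,\hat\alpha>0$, with the added zeroth-order terms making the decoupled subproblems well posed regardless of the interface data), so that $A^*,\widehat A^*$ are invertible and the adjoint states $p,\hat p$ are uniquely defined; and that $J$ is convex and coercive in the control, so the minimizer exists and the stationarity system is not merely necessary but characterizes it. Apart from these functional-analytic consistency checks, the derivation is a routine differentiation of a quadratic functional, and I would present it by writing the Lagrangian, listing the three stationarity blocks, and simplifying each using the operator definitions \eqref{A}--\eqref{Chat}.
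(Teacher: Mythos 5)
Your proposal is correct and follows essentially the same route as the paper: the paper likewise computes the Fréchet derivatives of the reduced functional $J(\overline{\phi},\hat{\psi})$ with respect to the controls, exploiting the affine dependence of $(u,\hat{u})$ on $(\overline{\phi},\hat{\psi})$ through the constraint and introducing $p$, $\hat{p}$ via the adjoint equations \eqref{p}--\eqref{phat} so as to transport $A^{-1}B$, $A^{-1}C$, $\widehat{A}^{-1}\widehat{B}$, $\widehat{A}^{-1}\widehat{C}$ onto the adjoint states. The Lagrangian packaging you describe is an equivalent formulation of that same computation, and your sign conventions and Riesz-map bookkeeping match \eqref{dphi}--\eqref{dpsi}.
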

\begin{proof}
	Let us compute the Frechet derivatives of the functional with respect to the control variables $\overline{\phi}$ and $\hat{\psi}$. To this end, we introduce increments $\delta\overline{\phi} \in \hat{V}'$ of $\overline{\phi}$ and $\delta \hat{\psi} \in \hat{V}$ of $\hat{\psi}$ and we recall that $\exists~ \delta {\psi} \in \mathcal{H}^{\Gamma}~:~\delta \psi=\mathcal{E}_{_\Gamma}\delta\hat{ \psi}$. We have:
	\begin{align*}
	&\cfrac{\partial J}{\partial \overline{\phi}}\big(\overline{\phi}+\delta\overline{\phi},\hat{\psi}\big) =\left(\gamma_{_\Gamma}{u}(\overline{\phi},\hat{\psi})-\psi,\gamma_{_\Gamma}u(\delta\overline{\phi},0)\right)_{\hsp}+\left(\gamma_{_\Gamma}\tilde{u}(\overline{\phi},\hat{\psi})-\psi,\gamma_{_\Gamma}\tilde{u}(\delta\overline{\phi},0)\right)_{\hsp}=\\
	&=\left( \gamma_{_\Gamma}u(\overline{\phi},\hat{\psi})-\mathcal{E}_{_\Gamma}\hat{\psi},\gamma_{_\Gamma}u(\delta \overline{\phi},0)\right)_{\hsp}+\left(\trg \exts \hat{u}(\overline{\phi},\hat{\psi})-\extg \hat{\psi},\trg \exts \hat{u}(\delta\overline{\phi},0) \right)_{\hsp}=\\
	&=\left\langle \gamma_{_\Gamma}^*\Theta_{\hsp}(\gamma_{_\Gamma}u(\overline{\phi},\hat{\psi})-\extg\hat{\psi}),u(\delta \overline{\phi},0)\right\rangle_{V',V}+\left\langle{\extg}^*\Theta_{\hsp}(\extg\hat{u}(\overline{\phi},\hat{\psi})-\extg\hat{\psi}),\hat{u}(\delta\overline{\phi},0)\right\rangle_{\hat{V}',\hat{V}}=\\
	&=\left\langle A^*p,A^{-1}B\delta \overline{\phi} \right\rangle_{V',V}-\left\langle \widehat{A}^*\hat{p},\widehat{A}^{-1}\widehat{B}\delta \overline{\phi} \right\rangle_{\hat{V}',\hat{V}}=\\
	&=\left\langle B^*p,\delta \overline{\phi} \right\rangle_{{\hat{V}},\hat{V}'} -\left\langle \widehat{B}^*\hat{p},\delta \overline{\phi} \right\rangle_{{\hat{V}},\hat{V}'}=\left( \Theta_{\hat{V}}(B^*p-\widehat{B}^*\hat{p}),\delta \overline{\phi}\right)_{\hat{V}'};
	\end{align*}
	\begin{align*}
	\cfrac{\partial J}{\partial \hat{\psi}}(&\overline{\phi},\hat{\psi}+\delta \hat{\psi})=\left(\gamma_{_\Gamma}{u}(\overline{\phi},\hat{\psi})-\psi,\gamma_{_\Gamma}u(0,\delta\hat{\psi})-\delta\psi\right)_{\hsp}+\left(\gamma_{_\Gamma}\tilde{u}(\overline{\phi},\hat{\psi})-\psi,\gamma_{_\Gamma}\tilde{u}(0,\delta\hat{\psi})-\delta\psi\right)_{\hsp}=\\
	&=\left( \gamma_{_\Gamma}u(\overline{\phi},\hat{\psi})-\extg\hat{\psi},\gamma_{_\Gamma}u(0,\delta\hat{\psi})\right)_{\hsp}-\left( \gamma_{_\Gamma}u(\overline{\phi},\hat{\psi})-\extg\hat{\psi},\extg\delta\hat{\psi}\right)_{\hsp}+\\
	&\qquad+\left( \gamma_{_\Gamma}\exts \hat{u}(\overline{\phi},\hat{\psi})-\extg\hat{\psi},\gamma_{_\Gamma}\exts \hat{u}(0,\delta\hat{\psi})\right)_{\hsp}-\left( \gamma_{_\Gamma}\exts\hat{u}(\overline{\phi},\hat{\psi})-\extg\hat{\psi},\extg\delta\hat{\psi}\right)_{\hsp}=\\[0.5em]
	&=\left\langle A^*p,A^{-1}C\delta \hat{\psi} \right\rangle_{V',V}-\left\langle {\extg}^*\Theta_{\hsp}(\gamma_{_\Gamma}u(\overline{\phi},\hat{\psi})-\extg\hat{\psi}),\delta\hat{\psi}\right\rangle_{\hat{V}',\hat{V}}+\\
	&\qquad+\left\langle \widehat{A}^*\hat{p},\widehat{A}^{-1}\widehat{C}\delta \hat{\psi} \right\rangle_{\hat{V}',\hat{V}}-\left\langle {\extg}^*\Theta_{\hsp}(\extg\hat{u}(\overline{\phi},\hat{\psi})-\extg\hat{\psi}),\delta\hat{\psi}\right\rangle_{\hat{V}',\hat{V}}=\\[0.5em]
	&=\left\langle C^*p,\delta \hat{\psi} \right\rangle_{{\hat{V}}',\hat{V}} -\left\langle {\extg}^*\Theta_{\hsp}(\gamma_{_\Gamma}u(\delta\overline{\phi},\hat{\psi})-\extg\hat{\psi}),\delta\hat{\psi}\right\rangle_{\hat{V}',\hat{V}}+\\
	&\qquad\qquad +\left\langle \widehat{C}^*\hat{p},\delta \hat{\psi} \right\rangle_{{\hat{V}}',\hat{V}}-\left\langle {\extg}^*\Theta_{\hsp}(\extg\hat{u}(\overline{\phi},\hat{\psi})-\extg\hat{\psi}),\delta\hat{\psi}\right\rangle_{\hat{V}',\hat{V}}=\\
	&=\left( \Theta_{\hat{V}}^{-1}(C^*p+\widehat{C}^*\hat{p}-{\extg}^*\Theta_{\hsp}\trg u(\overline{\phi},\hat{\psi})-{\extg}^*\Theta_{\hsp}\extg \hat{u}(\overline{\phi},\hat{\psi})+2{\extg}^*\Theta_{\hsp}\extg \hat{\psi}),\delta \hat{\psi}\right)_{\hat{V}},
	\end{align*}
	which yield the thesis.
\end{proof}
\noindent Starting from the derivatives computed in Proposition \ref{prop1} let us define the quantities
\begin{align}
&\delta\overline{\phi}=\Theta_{\hat{V}}(B^*p-\widehat{B}^*\hat{p})\in \hat{V}'\\
&\delta \hat{\psi}=\Theta_{\hat{V}}^{-1}(C^*p+\widehat{C}^*\hat{p}-{\extg}^*\Theta_{\hsp}(\trg u(\overline{\phi},\hat{\psi})+\extg \hat{u}(\overline{\phi},\hat{\psi})-2\extg \hat{\psi})) \in \hat{V}.
\end{align}
Then the following proposition holds:
\begin{prop}
	Given the variable $\chi=(\overline{\phi},\hat{\psi})$, let us increment it by a step $\zeta \delta \chi$, where $\delta \mathcal{X}=(\delta \overline{\phi}, \delta \hat{\psi})$. The steepest descent method corresponds to the stepsize
	\begin{equation*}
	\zeta=-\cfrac{\left( \delta \overline{\phi},\delta \overline{\phi}\right)_{\hat{V}'}+\left( \delta \hat{\psi},\delta \hat{\psi}\right)_{\hat{V}}}{\left\langle B\delta \overline{\phi}+C\delta\hat{\psi},\delta p \right\rangle_{V',V}\hspace{-2mm}+\left\langle -\widehat{B}\delta\overline{\phi}+\widehat{C}\delta\hat{\psi},\delta \hat{p} \right\rangle_{\hat{V}',\hat{V}}\hspace{-2mm}-\left\langle{\extg}^*\Theta_{\hsp}(\trg \delta u+\extg \delta \hat{u}-2\extg \delta\hat{\psi}),\delta \hat{\psi}\right\rangle_{\hat{V}',\hat{V}}}
	\end{equation*}
	where
	\begin{align*}
	&\delta u=u(\delta \overline{\phi},\delta \hat{\psi})=A^{-1}(B\delta \overline{\phi}+C\delta \hat{\psi}) \in V,\\
	&\delta \hat{u}=\hat{u}(\delta \overline{\phi},\delta \hat{\psi} )=\widehat{A}^{-1}(-\widehat{B}\delta\overline{\phi}+\widehat{C}\delta\hat{\psi}) \in \hat{V}
	\end{align*}
	and $\delta p \in V$, $\delta \hat{p} \in \hat{V}$ are such that:
	\begin{align*}
 &A^*\delta p=\gamma_{_\Gamma}^*\Theta_{\hsp}(\gamma_{_\Gamma}\delta u-\extg\delta\hat{\psi})\\
&\widehat{A}^* \delta\hat{p}={\extg}^*\Theta_{\hsp}(\extg\delta \hat{u}-\extg\delta \hat{\psi})
	\end{align*}
\end{prop}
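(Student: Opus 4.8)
The plan is to exploit the linearity of the state equations. Since \eqref{eq1}--\eqref{eq2} depend affinely on the data, the control-to-state maps $(\overline{\phi},\hat{\psi})\mapsto u(\overline{\phi},\hat{\psi})$ and $(\overline{\phi},\hat{\psi})\mapsto\hat{u}(\overline{\phi},\hat{\psi})$ are affine; by superposition, incrementing the control by $\zeta\,\delta\chi=\zeta(\delta\overline{\phi},\delta\hat{\psi})$ increments the states by exactly $\zeta\,\delta u$ and $\zeta\,\delta\hat u$, with $\delta u=A^{-1}(B\delta\overline{\phi}+C\delta\hat{\psi})$ and $\delta\hat u=\widehat{A}^{-1}(-\widehat{B}\delta\overline{\phi}+\widehat{C}\delta\hat{\psi})$ as in the statement. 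Consequently $j(\zeta):=J(\overline{\phi}+\zeta\delta\overline{\phi},\hat{\psi}+\zeta\delta\hat{\psi})$ is a quadratic polynomial in $\zeta$, and the steepest-descent update with exact line search selects $\zeta=-j'(0)/j''(0)$; this is well posed since $j''(0)\ge 0$ and $j''(0)=0$ would force $j'(0)=0$, i.e.\ the current iterate would already be stationary. The task therefore reduces to computing $j'(0)$ and $j''(0)$.

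For the numerator I would note that $j'(0)$ is just the Fréchet derivative of $J$ at $(\overline{\phi},\hat{\psi})$ evaluated in the direction $(\delta\overline{\phi},\delta\hat{\psi})$, hence the sum of the two partial derivatives already computed in the proof of Proposition~\ref{prop1}. Since $\delta\overline{\phi}$ and $\delta\hat{\psi}$ are, by construction, precisely the Riesz representatives (in $\hat V'$ and $\hat V$, respectively) of those partial derivatives, evaluating them on themselves gives $j'(0)=(\delta\overline{\phi},\delta\overline{\phi})_{\hat V'}+(\delta\hat{\psi},\delta\hat{\psi})_{\hat V}$, which is the numerator.

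For the denominator I would differentiate $j$ once more: because $j$ is quadratic, $j''(0)=\|\trg\delta u-\extg\delta\hat{\psi}\|_{\hsp}^2+\|\trg\exts\delta\hat u-\extg\delta\hat{\psi}\|_{\hsp}^2$. Each squared norm is then rewritten as a duality pairing by inserting the Riesz isomorphism $\Theta_{\hsp}$, passing to the adjoints $\trg^{*}$ and ${\extg}^{*}$, and invoking the adjoint-state equations for $\delta p$ and $\delta\hat p$ stated in the proposition, together with $A\,\delta u=B\delta\overline{\phi}+C\delta\hat{\psi}$, $\widehat{A}\,\delta\hat u=-\widehat{B}\delta\overline{\phi}+\widehat{C}\delta\hat{\psi}$ and $\extg=\trg\circ\exts$. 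This is the same chain of manipulations carried out in the proof of Proposition~\ref{prop1}, now with $(\delta u,\delta\hat u,\delta p,\delta\hat p)$ in place of $(u,\hat u,p,\hat p)$ and with vanishing forcing; collecting the two terms proportional to $\delta\hat{\psi}$ into a single one yields exactly the three-term denominator. Substituting the two expressions into $\zeta=-j'(0)/j''(0)$ gives the claimed stepsize.

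I expect the main, and essentially only, difficulty to be bookkeeping: keeping the several duality pairings and Riesz/adjoint identifications between $V$, $\hat V$, $\hsp$ and their duals consistent throughout the computation of $j''(0)$, and being careful to use $\trg\,\exts=\extg$ so that $\trg\exts\delta\hat u$ is identified with $\extg\delta\hat u$. No estimate beyond those already available in Proposition~\ref{prop1} is required.
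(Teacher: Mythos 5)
Your proposal is correct and follows essentially the same route as the paper: the paper likewise expands $J(\chi+\zeta\delta\chi)$ as a quadratic in $\zeta$, sets $\partial J/\partial\zeta=0$ to get $\zeta=-j'(0)/j''(0)$, identifies the numerator with $\left(\delta\overline{\phi},\delta\overline{\phi}\right)_{\hat V'}+\left(\delta\hat{\psi},\delta\hat{\psi}\right)_{\hat V}$ via the Riesz representatives from Proposition~\ref{prop1}, and rewrites $j''(0)$ through the adjoint states $\delta p,\delta\hat p$. Your remark on the well-posedness of the quotient is a small addition not made explicit in the paper, but the argument is the same.
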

\begin{proof}
	It is sufficient to set to zero the derivative $\frac{\partial J(\chi+\zeta \delta \chi)}{\partial \zeta}$. In order to get a lighter notation let us set
	\begin{equation*}
	u=u(\overline{\phi},\hat{\psi});~~\delta u=u(\delta\overline{\phi},\delta\hat{\psi});\qquad
	\hat{u}=\hat{u}(\overline{\phi},\hat{\psi});~~\delta \hat{u}=\hat{u}(\delta\overline{\phi},\delta\hat{\psi});
	\end{equation*}
	\begin{align*}
		J&(\chi+\zeta\delta \chi)=J(\overline{\phi}+\zeta\delta \overline{\phi},\hat{\psi}+\zeta\delta \hat{\psi})=\\[0.5em]
		&=\cfrac{1}{2}\left(\gamma_{_\Gamma}u(\overline{\phi}+\zeta\delta \overline{\phi},\hat{\psi}+\zeta\delta \hat{\psi})-\psi-\zeta\delta\psi,\gamma_{_\Gamma}u(\overline{\phi}+\zeta \delta \overline{\phi},\hat{\psi}+\zeta\delta \hat{\psi})-\psi-\zeta\delta \psi \right)_{\hsp}\\[-0.2em]
		&+\cfrac{1}{2}\left(\gamma_{_\Gamma}\tilde{u}(\overline{\phi}+\zeta\delta \overline{\phi},\hat{\psi}+\zeta\delta \hat{\psi})-\psi-\zeta\delta\psi,\gamma_{_\Gamma}\tilde{u}(\overline{\phi}+\zeta \delta \overline{\phi},\hat{\psi}+\zeta\delta \hat{\psi})-\psi-\zeta\delta \psi \right)_{\hsp}=\\[0.5em]
		&=\cfrac{1}{2}\left(\gamma_{_\Gamma} u+\zeta\gamma_{_\Gamma}\delta u-\extg\hat{\psi}-\zeta\extg\delta\hat{\psi},\gamma_{_\Gamma}u+\zeta\gamma_{_\Gamma}\delta u-\extg\hat{\psi}-\zeta\extg\delta\hat{\psi}\right)_{\hsp}+\\[-0.2em]
		&+\cfrac{1}{2}\left(\gamma_{_\Gamma}\exts\hat{u}+\zeta\gamma_{_\Gamma}\exts\delta\hat{u}-\extg\hat{\psi}-\zeta\extg\delta\hat{\psi},\gamma_{_\Gamma}\extg\hat{u}+\zeta\gamma_{_\Gamma}\extg\delta\hat{u}-\extg\hat{\psi}-\zeta\extg\delta\hat{\psi}\right)_{\hsp}=\\[0.5em]	
		&=J(\overline{\phi},\hat{\psi})+\zeta\left( \gamma_{_\Gamma}u-\extg\hat{\psi},\gamma_{_\Gamma}\delta u-\extg\delta\hat{\psi}\right)_{\hsp}+\zeta\left( \extg\hat{u}-\extg\hat{\psi},\extg\delta\hat{u}-\extg\delta\hat{\psi}\right)_{\hsp}+\\[-0.2em]
		&+ \cfrac{\zeta^2}{2}\left( \gamma_{_\Gamma}\delta u-\extg\delta\hat{\psi},\gamma_{_\Gamma}\delta u-\extg\delta\hat{\psi}\right) _{\hsp}+\cfrac{\zeta^2}{2}\left( \extg\delta\hat{u}-\extg\delta\hat{\psi},\extg\delta \hat{u}-\extg\delta\hat{\psi}\right) _{\hsp}
	\end{align*}
	\begin{align*}
	&\cfrac{\partial J(\chi+\zeta \delta \chi)}{\partial \zeta}=\left( \gamma_{_\Gamma}u-\extg\hat{\psi},\gamma_{_\Gamma}\delta u-\extg\delta\hat{\psi}\right)_{\hsp}+\left( \extg\hat{u}-\extg\hat{\psi},\extg\delta\hat{u}-\extg\delta\hat{\psi}\right)_{\hsp}+\\[-0.2em]
	&\quad+\zeta\left( \gamma_{_\Gamma}\delta u-\extg\delta\hat{\psi},\gamma_{_\Gamma}\delta u-\extg\delta\hat{\psi}\right) _{\hsp}+\zeta\left( \extg\delta\hat{u}-\extg\delta\hat{\psi},\extg\delta\hat{u}-\extg\delta\hat{\psi}\right) _{\hsp}=0
	\end{align*}
	\begin{align*}
	\Rightarrow ~\zeta=-\cfrac{\left( \gamma_{_\Gamma}u-\extg\hat{\psi},\gamma_{_\Gamma}\delta u-\extg\delta\hat{\psi}\right)_{\hsp}+\left( \extg\hat{u}-\extg\hat{\psi},\extg\delta\hat{u}-\extg\delta\hat{\psi}\right)_{\hsp}}{\left( \gamma_{_\Gamma}\delta u-\extg\delta\hat{\psi},\gamma_{_\Gamma}\delta u-\extg\delta\hat{\psi}\right) _{\hsp}+\left( \extg\delta\hat{u}-\extg\delta\hat{\psi},\extg\delta\hat{u}-\extg\delta \hat{\psi}\right) _{\hsp}}
	\end{align*}
	Rearranging properly the terms we get \footnotesize
	\begin{align*}
	&\zeta=-\cfrac{\left\langle A^*p,A^{-1}(B\delta\overline{\phi}+C\delta\hat{\psi})\right\rangle _{V',V}\hspace{-2mm}+\left\langle\widehat{A}^*\hat{p},\widehat{A}^{-1}(-\widehat{B}\delta\overline{\phi}+\widehat{C}\delta\hat{\psi}) \right\rangle_{\hat{V}',\hat{V}}\hspace{-2mm}-\left\langle{\extg}^*\Theta_{\hsp}(\trg u+\extg \hat{u}-2\extg \hat{\psi}),\delta \hat{\psi}\right\rangle_{\hat{V}',\hat{V}}}{\hspace{-1mm}\left\langle A^{^{-1}}(B\delta\overline{\phi}+C\delta\hat{\psi}),A^{^*}\delta p\right\rangle _{V,V'}\hspace{-2mm}+\left\langle \widehat{A}^{^{-1}}\hspace{-0.8mm}(-\widehat{B}\delta\overline{\phi}+\widehat{C}\delta\hat{\psi}),\widehat{A}^*\delta \hat{p}\right\rangle_{\hat{V},\hat{V}'}\hspace{-2mm}-\left\langle{\extg}^*\Theta_{\hsp}(\trg \delta u+\extg \delta\hat{u}-2\extg \delta\hat{\psi},\delta \hat{\psi})\right\rangle_{\hat{V}',\hat{V}}}=\\[0.5em]
	&=-\cfrac{\left\langle B^*p,\delta \overline{\phi} \right\rangle_{{\hat{V}},\hat{V}'}\hspace{-2mm}+\left\langle C^*p,\delta \hat{\psi} \right\rangle_{{\hat{V}}',\hat{V}}\hspace{-2mm}-\left\langle \widehat{B}^*\hat{p},\delta \overline{\phi} \right\rangle_{{\hat{V}},\hat{V}'}\hspace{-2mm}+\left\langle \widehat{C}^*\hat{p},\delta \hat{\psi} \right\rangle_{{\hat{V}}',\hat{V}}\hspace{-2mm}-\left\langle{\extg}^*\Theta_{\hsp}(\trg u+\extg \hat{u}-2\extg \hat{\psi}),\delta \hat{\psi}\right\rangle_{\hat{V}',\hat{V}}}{\left\langle B\delta \overline{\phi}+C\delta\hat{\psi},\delta p \right\rangle_{V',V}\hspace{-2mm}+\left\langle -\widehat{B}\delta\overline{\phi}+\widehat{C}\delta\hat{\psi},\delta \hat{p} \right\rangle_{\hat{V}',\hat{V}}\hspace{-2mm}-\left\langle{\extg}^*\Theta_{\hsp}(\trg \delta u+\extg \delta \hat{u}-2\extg\delta \hat{\psi}),\delta \hat{\psi}\right\rangle_{\hat{V}',\hat{V}}}=\\[0.5em]
	&=-\cfrac{\left( \delta \overline{\phi},\delta \overline{\phi}\right)_{\hat{V}'}+\left( \delta \hat{\psi},\delta \hat{\psi}\right)_{\hat{V}}}{\left\langle B\delta \overline{\phi}+C\delta\hat{\psi},\delta p \right\rangle_{V',V}\hspace{-2mm}+\left\langle -\widehat{B}\delta\overline{\phi}+\widehat{C}\delta\hat{\psi},\delta \hat{p} \right\rangle_{\hat{V}',\hat{V}}\hspace{-2mm}-\left\langle{\extg}^*\Theta_{\hsp}(\trg \delta u+\extg \delta \hat{u}-2\extg \delta\hat{\psi}),\delta \hat{\psi}\right\rangle_{\hat{V}',\hat{V}}}
	\end{align*}\normalsize
	that yields the thesis.
\end{proof}

\section{Managing multiple cylinders and their intersections}\label{segment_inter_cont}
\newcommand{\I}{\mathcal{I}}
The previous discussion can be readily adapted to the case of multiple, say $\I$, small cylindrical inclusions $\Sigma_k$, with lateral surface $\Gamma_k$ and centreline $\Lambda_i$, $i=1,\ldots,\I$. Function spaces $\hat{V}$, $\widetilde{V}$, $\mathcal{H}^{\Gamma}$ are introduced on each segment and denoted by $\hat{V}_i$, $\widetilde{V}_i$, $\mathcal{H}^{\Gamma_i}$, while trace operator $\trg$, extension operator $\mathcal{E}_{_{\Gamma}}$  and $\exts$ are easily re-defined for each segment and denoted by $\trgi$, $\extgi$ and $\extsi$, respectively. Operators \eqref{A}-\eqref{Chat} are re-written as:
\begin{align*}
&\left\langle Au,v\right\rangle_{V',V}=(\bm{{K}}\nabla u, \nabla v)_{V}+\alpha\sum_{i=1}^{\I}(|\Gamma_i|\check{u}_i,\check{v}_i)_{\hat{V}_i} \\
&\hspace{2cm} \forall v \in V, ~\check{v}_i \in \hat{V}_i: \trgi v=\extgi\check{v}_i, \forall i=1,\ldots,\I  \\
&\left\langle \widehat{A}_i\hat{u}_i,\hat{v}_i\right\rangle_{\hat{V}_i',\hat{V}_i}=\Big(\bm{\tilde{K}}_i|\Sigma_i|\cfrac{d\hat{u}_i}{ds},\cfrac{d\hat{v}_i}{ds}\Big)_{\hat{V}_i}+\hat{\alpha}(|\Gamma_i|\hat{u}_i,\hat{v}_i )_{\hat{V}_i}\qquad \qquad \forall \hat{v}_i \in \hat{V}_i
\end{align*}
\begin{align*}
&\left\langle B_i \overline{\phi_i},v_i \right\rangle _{V_i',V_i}=\left\langle|\Gamma_i|\overline{\phi}_i,\check{v}_i \right\rangle_{\hat{V}_i',\hat{V}_i} &~\forall v \in V :\trgi v=\extgi\check{v}_i, ~\check{v}_i \in \hat{V}_i \\
&\left\langle \widehat{B}_i \overline{\phi}_i,\hat{v} \right\rangle _{\hat{V}_i',\hat{V}_i}=\left\langle |\Gamma_i|\overline{\phi}_i,\hat{v} \right\rangle_{\hat{V}_i',\hat{V}_i} &~\forall \hat{v} \in \hat{V}_i \\[0.5em]
&\left\langle C_i \hat{\psi}_i,v \right\rangle _{V',V}=\alpha(|\Gamma_i|\hat{\psi},\check{v})_{\hat{V}_i} &~ \forall v \in V, ~\check{v}_i \in \hat{V}_i :\trgi v=\extgi\check{v}\\
&\left\langle \widehat{C}_i \hat{\psi}_i,\hat{v} \right\rangle_{\hat{V}_i',\hat{V}_i}=\hat{\alpha}(|\Gamma_i| \hat{\psi}_i,\hat{v})_{\hat{V}_i} &~\hat{v} \in \hat{V}_i. 
\end{align*}
Problem \eqref{eq1}-\eqref{eq2} can be finally re-written for an arbitrary set of $\I$ centrelines as:
\begin{align}
&Au-B_i\overline{\phi}_i-C_i\hat{\psi}_i=F\label{eq1loc}\\
&\widehat{A}_i\hat{u}_i+\widehat{B}_i\overline{\phi}_i-\widehat{C}_i\hat{\psi}_i=G_i\label{eq2loc},
\end{align}
with the definition of $G_i$ following from the definition of an operator in the form of \eqref{G} for each segment. 
in which we consider different 1D variables $\hat{u}_i$, $\overline{\phi}_i$, $\hat{\psi}_i$ on the different 1D domains. In case cylindrical inclusions with intersecting centrelines, i.e. such that $\bar{\Lambda}_i\cup\bar{\Lambda}_j\neq\emptyset$, $i,j\in [1,\ldots,\I]$, we can still adopt formulation \eqref{eq1loc}-\eqref{eq2loc} by splitting the intersecting centrelines into non intersecting sub-segments and then adding continuity conditions at the intersection points.

Finally, the cost functional is re-written as:
 \begin{equation}
 J=\sum_{i=1}^\I J_i:=\sum_{k=1}^\I\cfrac{1}{2}\left( ||\gamma_{_{\Gamma_i}}u(\overline{\phi}_i,\hat{ \psi}_i)-\psi_i||_{\hsp}^2+||\gamma_{_\Gamma}\tilde{u}_i(\overline{\phi}_i,\hat{ \psi}_i)-\psi_i||_{\hsp}^2\right)
 \end{equation}
 where $\tilde{u}_i=\extsi\hat{u}_i$ and $\psi_i=\extgi\hat{\psi}_i$.

\section{Discrete matrix formulation}\label{Discrete}
Here the discrete matrix form of problem \eqref{minJ}-\eqref{eq_compatta} is presented.
The 3D-1D coupling is trivial in the discrete approximation spaces, given the regularity properties of the function spaces commonly used for discretization. Nonetheless, the present approach has the advantage of having a well posed mathematical formulation, and it even allows the use of non conforming meshes at the interfaces of the subdomains. Indeed, thanks to the optimization framework, it is possible to use completely independent meshes for the various domains and also for the interface variables, without any theoretical constraint on mesh sizes.

For the sake of generality we will consider, from the beginning, the presence of multiple segments crossing domain $\Omega$. 
Let us consider $\I$ segments of different length and orientation, defined ad $\Lambda_i=\left\lbrace \bm{\lambda}_i(s),s \in (0, S_i)\right\rbrace $, $i=1,...,\I$.
Let us consider a tetrahedral mesh $\mathcal{T}$ of domain $\Omega$, and let us define, on this mesh, Lagrangian finite element basis functions $\left\lbrace \varphi_k \right\rbrace_{k=1}^{N}$, such that $U=\sum_{k=1}^{N}U_k\varphi_k$ is the discrete approximation of pressure $u$. 
Let us then build three partitions of each segment $\Lambda_i$,  named $\hat{\mathcal{T}_i}$, $\tau^{\phi}_i$ and $\tau^{\psi}_i$, defined independently from each other and from $\mathcal{T}$. Let us further define the basis functions $\left\lbrace\hat{\varphi}_{i,k} \right\rbrace _{k=1}^{\hat{N}_i}$ on $\hat{\mathcal{T}_i}$, $\left\lbrace \theta_{i,k}\right\rbrace_{k=1}^{N_i^{\phi}}$ on $\tau^{\phi}_i$ and $\left\lbrace \eta_{i,k}\right\rbrace_{k=1}^{N_i^{\psi}}$ on $\tau^{\psi}_i$, with $\hat{N}_i$, $N_i^{\phi}$ and $N_i^{\psi}$ denoting the number of DOFs of the discrete approximations of the variables $\hat{u}_i$, $\overline{\phi}_i$ and $\hat{\psi}_i$ respectively, having set:
	\begin{equation*}
	\hat{U}_i=\sum_{k=1}^{\hat{N}_i}\hat{U}_{i,k}\hat{\varphi}_{i,k}, \quad \Phi_i=\sum_{k=1}^{N_i^{\phi}}\Phi_{i,k}\theta_{i,k}, \quad \Psi_i=\sum_{k=1}^{N_i^{\psi}}\Psi_{i,k}\eta_{i,k}
	\end{equation*}

We then define the following matrices:
\begin{align*}
&\bm{A} \in \mathbb{R}^{N\times N} \text{ s.t. } (A)_{kl}=\int_{\Omega}\bm{{K}}\nabla\varphi_k\nabla\varphi_l ~d\omega+\alpha\sum_{i=1}^{\I}\int_{\Lambda_i}|\Gamma(s_i)|{\varphi_k}_{|_{\Lambda_i}}{\varphi_l}_{|_{\Lambda_i}} ds\\[0.8em]
&\bm{\hat{A}_i} \in \mathbb{R}^{\hat{N}_i\times \hat{N}_i} \text{ s.t. } (\hat{A}_i)_{kl}=\int_{\Lambda_i}\bm{\tilde{K}_i}|\Sigma(s_i)|\frac{d\hat{\varphi}_{i,k}}{ds}\frac{d\hat{\varphi}_{i,l}}{ds} ~ds+\hat{\alpha}\int_{\Lambda_i}|\Gamma(s_i)|\hat{\varphi}_{i,k}\hat{\varphi}_{i,l}~ds
\end{align*}
\begin{align*}
&\bm{B_i} \in \mathbb{R}^{N\times N_i^{\phi}} \text{ s.t. } (B_i)_{kl}=\int_{\Lambda_i}|\Gamma(s_i)|{{\varphi_{k}}_{|_{\Lambda_i}}\theta_{i,l}}~ds\\
&\bm{\hat{B}_i} \in \mathbb{R}^{\hat{N}_i\times N_i^{\phi}} \text{ s.t. } (\hat{B}_i)_{kl}=\int_{\Lambda_i}|\Gamma(s_i)|{\hat{\varphi}_{i,k}~\theta_{i,l}}~ds\\
&\bm{C_i}^{\alpha} \in \mathbb{R}^{N\times N_i^{\psi}} \text{ s.t. } (C_i^{\alpha})_{kl}=\alpha\int_{\Lambda_i}|\Gamma(s_i)|{\varphi_k}_{|_{\Lambda_i}}\eta_{i,l}~ds\\
&\bm{\hat{C}_i}^{\alpha} \in \mathbb{R}^{\hat{N}_i\times N_i^{\psi}} \text{ s.t. } (\hat{C_i}^{\alpha})_{kl}=\hat{\alpha}\int_{\Lambda_i}|\Gamma(s_i)|{\hat{\varphi}}_{i,k}~\eta_{i,l}~ds,
\end{align*}
and the vectors 
\begin{align*}
&f\in \mathbb{R}^N \text{ s.t. } f_k=\int_{\Omega}f\varphi_k~d\omega\\
&g_i\in \mathbb{R}^{\hat{N}_i} \text{ s.t. } (g_i)_k=\int_{\Lambda_i}|\Sigma(s_i)|\overline{\overline{g}}~\hat{\varphi}_{i,k}~ds.
\end{align*}
Setting $\hat{N}=\sum_{i=1}^{\I}\hat{N}_i$, $N^{\psi}=\sum_{i=1}^{\I}N_i^{\psi}$ and $N^{\phi}=\sum_{i=1}^{\I}\ N_i^{\phi}$, 
we can group the matrices as follows for all the segments in the domain:
\begin{align*}
&\bm{B}=\left[ \bm{B_1}, \bm{B_2},...,\bm{B_{\I}}\right]  \in \mathbb{R}^{N\times N^{\phi}} \qquad \bm{\hat{B}}=\text{diag}\left( \bm{\hat{B}_1},...,\bm{\hat{B}_{\I}}\right)   \in \mathbb{R}^{\hat{N}\times N^{\phi}}\\
&\bm{C}^{\alpha}=\left[ \bm{C_1}^{\alpha}, \bm{C_2}^{\alpha},...,\bm{C_{\I}}^{\alpha}\right] \in \mathbb{R}^{N\times N^{\psi}} \qquad \bm{\hat{C}}^{\alpha}=diag\left(  \bm{\hat{C}_1}^{\alpha},...,\bm{\hat{C}_{\I}}^{\alpha}\right)   \in \mathbb{R}^{\hat{N}\times N^{\psi}}
\end{align*}
and, for non intersecting segments, we have:
\begin{equation*}
\bm{\hat{A}}=\text{diag}\left(  \bm{\hat{A}_1},...,\bm{\hat{A}_{\I}}\right)   \in \mathbb{R}^{\hat{N}\times\hat{N}},
\end{equation*}
whereas, for groups of intersecting segments, we proceed as described in Section~\ref{segment_inter_cont} and we enforce continuity through Lagrange multipliers. For each connected group of segments we thus have:
\begin{displaymath}
\bm{\hat{A}^\star_\zeta}=\left[
\begin{array}{cc}
\text{diag}\left(  \bm{\hat{A}_{\zeta_1}},...,\bm{\hat{A}_{\zeta_n}}\right) & \bm{Q}^T\\
\bm{Q} & \bm{0}
\end{array}
\right]
\end{displaymath}
where matrix $\bm{Q}$ simply equates the DOFs at the extrema of connected sub-segments. Matrices $\bm{\hat{A}^\star_\zeta}$, for $\zeta$ spanning the whole number of connected groups of segments, are assembled block diagonally to form matrix $\bm{\hat{A}}$. Please note that, for disconnected segments, each matrix $\bm{\hat{A}^\star_\zeta}$ coincides with matrix $\bm{\hat{A}_\zeta}$. 
Finally we can write
\begin{align}
&\bm{A}U-\bm{B}\Phi-\bm{C}^{\alpha}\Psi=f\label{eq1discr}\\
&\bm{\hat{A}}\hat{U}+\bm{\hat{B}}\Phi-\bm{\hat{C}}^{\alpha}\Psi=g\label{eq2discr}
\end{align}
with
\begin{align*}
&\hat{U}=\left[\hat{U}_1^T,...,\hat{U}_{\I}^T \right]^T \in \mathbb{R}^{\hat{N}}; \quad g=[g_1^T,g_2^T,...,g_{\I}^T]^T\in\mathbb{R}^{\hat{N}}\\ 
&\Phi=\left[\Phi_1^T,...,\Phi_{\I}^T \right]^T \in \mathbb{R}^{N^{\phi}}; \quad \Psi=\left[\Psi_1^T,...,\Psi_{\I}^T \right]^T\in \mathbb{R}^{N^{\psi}}.
\end{align*}

In order to get a more compact form of the previous equations, let us set $W=(U,\hat{U})$ and
\begin{equation}
\bm{\mathcal{A}}=\begin{bmatrix}
\bm{A} & 0\\
0 &\bm{\hat{A}}
\end{bmatrix}, \qquad
\bm{\mathcal{B}}=\begin{bmatrix}
\bm{B}\\
-\bm{\hat{B}}
\end{bmatrix},\qquad
\bm{\mathcal{C}}^{\alpha}=\begin{bmatrix}
\bm{C}^{\alpha}\\
\bm{\hat{C}}^{\alpha}
\end{bmatrix}\qquad
\mathcal{F}=\begin{bmatrix}
f\\
g
\end{bmatrix},
\label{Adef}
\end{equation}
so that the discrete constraint equations become:
\begin{equation}
\bm{\mathcal{A}}W-\bm{\mathcal{B}}\Phi+\bm{\mathcal{C}}^{\alpha}\Psi=\mathcal{F}.\label{eqdiscr_compatta}
\end{equation}

Concerning the cost functional in \eqref{functional}, first we define matrices
\begin{align*}
&\bm{G_i} \in \mathbb{R}^{N \times N} \text{ s.t. } (G_i)_{kl}=\int_{\Lambda_i}{\varphi_k}_{|_{\Lambda_i}}{\varphi_l}_{|_{\Lambda_i}}ds\\
&\bm{\hat{G}_i} \in \mathbb{R}^{\hat{N}_i \times \hat{N}_i} \text{ s.t. } (\hat{G}_i)_{kl}=\int_{\Lambda_i}{\hat{\varphi}}_{i,k}~{\hat{\varphi}}_{i,l}~ds\\
&\bm{G_i^{\psi}} \in \mathbb{R}^{N_i^{\psi} \times N_i^{\psi}}  \text{ s.t. } (G_i^{\psi})_{kl}=\int_{\Lambda_i}\eta_{i,k}~\eta_{i,l}~ds \\
&\bm{C_i} \in \mathbb{R}^{N\times N_i^{\psi}} \text{ s.t. } (C_i)_{kl}=\int_{\Lambda_i}{\varphi_k}_{|_{\Lambda_i}}\eta_{i,l}~ds\\
&\bm{\hat{C}_i} \in \mathbb{R}^{\hat{N}_i\times N_i^{\psi}} \text{ s.t. } (\hat{C_i})_{kl}=\int_{\Lambda_i}{\hat{\varphi}}_{i,k}~\eta_{i,l}~ds
\end{align*}
and then
\begin{equation}
\label{Gdef}
\bm{G}=\sum_{i=1}^{\I}\bm{G}_i \in \mathbb{R}^{N \times N} \qquad \bm{\hat{G}}=\text{diag}\left( \bm{\hat{G}_1}^T,...,\bm{\hat{G}_{\I}}^T\right)  \in \mathbb{R}^{\hat{N} \times\hat{N}} \qquad \bm{\mathcal{G}}=\begin{bmatrix}
\bm{G} & 0\\0 &\bm{\hat{G}}
\end{bmatrix}
\end{equation}
\begin{equation*}
\bm{G^{\psi}}=\text{diag}\left(\bm{G_1^{\psi}},...,\bm{G_{\I}^{\psi}} \right) \in \mathbb{R}^{N^{\psi}\times N^{\psi}}
\end{equation*}
\begin{align*}
\bm{C}=\left[ \bm{C_1}, \bm{C_2},...,\bm{C_{\I}}\right] \in \mathbb{R}^{N\times N^{\psi}} \quad \bm{\hat{C}}=\text{diag}\left(  \bm{\hat{C}_1},...,\bm{\hat{C}_{\I}}\right)   \in \mathbb{R}^{\hat{N}\times N^{\psi}} \quad \bm{\mathcal{C}}=\begin{bmatrix} \bm{C} \\ \bm{\hat{C}} \end{bmatrix}.
\end{align*}
The discrete cost functional then reads:
\begin{align}
\tilde{J}&=\cfrac{1}{2}\left( U^T\bm{G}U-U^T\bm{C}\Psi-\Psi^T\bm{C}^TU+\hat{U}^T\bm{\hat{G}}\hat{U}-\hat{U}^T\bm{\hat{C}}\Psi-\Psi^T\bm{\hat{C}}^T\hat{U}+2\Psi^T\bm{G^{\psi}}\Psi\right)=\nonumber \\
&=\cfrac{1}{2}\left( W^T\bm{\mathcal{G}}W-W^T\bm{\mathcal{C}}\Psi-\Psi^T\bm{\mathcal{C}}^TW+2\Psi^T\bm{G^{\psi}}\Psi\right). \label{Jtilde}
\end{align}
The discrete matrix formulation of the 3D-1D problem finally takes the form:
\begin{align}
	\min_{(\Phi,\Psi)}\tilde{J}(\Phi,\Psi) \text{ subject to } (\ref{eqdiscr_compatta}). \label{minJtilde}
\end{align}
First order optimality conditions for the above problem correspond to the saddle-point system:

\begin{equation}
\label{KKT}
\bm{\mathcal{S}}=\begin{bmatrix}
\bm{\mathcal{G}} & \bm{0} & -\bm{\mathcal{C}} & \bm{\mathcal{A}}^T \\ 
\bm{0} & \bm{0} & \bm{0} & \bm{\mathcal{B}}^T \\
-\bm{\mathcal{C}}^T & \bm{0} & 2\bm{G^{\psi}} & (-\bm{\mathcal{C}}^{\alpha})^T\\
\bm{\mathcal{A}} & \bm{\mathcal{B}} & -\bm{\mathcal{C}}^{\alpha} & \bm{0}\\
\end{bmatrix}
\end{equation}
\begin{equation}
\label{optprob}
\bm{\mathcal{S}}\begin{bmatrix}
W\\ \Phi \\ \Psi \\ -P
\end{bmatrix}=\begin{bmatrix}
\mathcal{F} \\ \bm{0}  \\ \bm{0} \\ \bm{0} 
\end{bmatrix}
\end{equation}
\begin{prop}
\label{discretewellposedness}
Matrix $\bm{\mathcal{S}}$ in \eqref{KKT} is non-singular and the unique solution of \eqref{optprob} is equivalent to the solution of the optimization problem  \eqref{minJtilde}.
\end{prop}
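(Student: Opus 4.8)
The plan is to show non-singularity of $\bm{\mathcal{S}}$ by a standard saddle-point argument, treating \eqref{optprob} as the KKT system of the equality-constrained quadratic program \eqref{minJtilde}. Since the system is square, it suffices to show that the only solution of the homogeneous system $\bm{\mathcal{S}}[W,\Phi,\Psi,-P]^T = 0$ is the trivial one. First I would read off the four block-row equations: (i) $\bm{\mathcal{G}}W - \bm{\mathcal{C}}\Psi + \bm{\mathcal{A}}^T P = 0$; (ii) $\bm{\mathcal{B}}^T P = 0$; (iii) $-\bm{\mathcal{C}}^T W + 2\bm{G^\psi}\Psi - (\bm{\mathcal{C}}^\alpha)^T P = 0$; (iv) $\bm{\mathcal{A}}W + \bm{\mathcal{B}}\Phi - \bm{\mathcal{C}}^\alpha\Psi = 0$. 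The key structural fact I would exploit is that $\bm{\mathcal{A}}$ is invertible: indeed $\bm{A}$ is the stiffness matrix of a coercive bilinear form (sum of the $\bm{K}$-weighted Dirichlet form on $H^1_0(D)$, which is coercive by Poincar\'e, plus the nonnegative stabilization term $\alpha\sum_i(|\Gamma_i|\cdot,\cdot)$), and $\bm{\hat{A}}$ is likewise SPD (the $\bm{\tilde K}$-weighted 1D Dirichlet form plus the $\hat\alpha$ mass term, with intersection constraints handled by the invertible augmented blocks $\bm{\hat A}^\star_\zeta$ — here one uses that $\bm{Q}$ has full row rank so the saddle-point block is nonsingular). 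I would state this coercivity/invertibility of $\bm{\mathcal{A}}$ as the first lemma-style step.

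Next, the standard reduction: from (iv), $W = \bm{\mathcal{A}}^{-1}(\bm{\mathcal{C}}^\alpha\Psi - \bm{\mathcal{B}}\Phi)$, so $W$ is determined by $(\Phi,\Psi)$; this $W$ is exactly the discrete state $W(\Phi,\Psi)$ built from the homogeneous constraint. Similarly from (i), $P = -\bm{\mathcal{A}}^{-T}(\bm{\mathcal{G}}W - \bm{\mathcal{C}}\Psi)$ is determined, and it is the adjoint state. Substituting into (ii) and (iii) gives the reduced optimality system in $(\Phi,\Psi)$ alone, which is precisely $\nabla\tilde J(\Phi,\Psi) = 0$ for the homogeneous reduced functional obtained by plugging $W = W(\Phi,\Psi)$ into \eqref{Jtilde} with $\mathcal{F}=0$. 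By the computation already carried out in Proposition~\ref{prop1} (transcribed to the discrete level), this reduced functional equals $\tilde J(\Phi,\Psi) = \frac12\big( |\trg U(\Phi,\Psi) - \extg\Psi|^2 + |\trg\tilde U(\Phi,\Psi) - \extg\Psi|^2\big)$ in the relevant discrete norms, i.e. it is a nonnegative quadratic form, so a stationary point is a global minimizer and the reduced Hessian is positive semidefinite.

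It then remains to upgrade semidefiniteness to definiteness, which is the main obstacle. Concretely, I must show that if the reduced gradient vanishes then $(\Phi,\Psi)=0$ (and hence $W=0$, $P=0$). From $\tilde J = 0$ at the homogeneous solution one gets $\trg U = \extg\Psi$ and $\trg\tilde U = \extg\Psi$, so $\trg U = \trg\tilde U = \extg\Psi$. Feeding this back into the homogeneous constraint equations \eqref{eq1discr}--\eqref{eq2discr}: the stabilization terms cancel ($\alpha|\Gamma_i|(\check U_i - \Psi_i) = 0$ and likewise $\hat\alpha$), so $U$ solves the homogeneous 3D problem $\bm{A}_0 U = \bm{B}\Phi$ with $\bm{A}_0$ the pure stiffness matrix, and testing with $U$ together with $\bm{\mathcal{B}}^TP=0$-type manipulations forces $U=0$, then $\extg\Psi = \trg U = 0$, and by the injectivity of $\extg$ on the discrete interface space (a mesh-level argument: $\extg$ maps the nodal basis of $\tau^\psi_i$ to linearly independent functions) one gets $\Psi = 0$; finally $\bm{\hat A}\hat U + \bm{\hat B}\Phi = 0$ with $\hat U$ forced to zero gives $\bm{\hat B}\Phi = 0$, and full column rank of $\bm{\hat B}$ (each $\bm{\hat B}_i$ has full column rank because $\{\theta_{i,k}\}$ are independent and tested against the finer space) yields $\Phi = 0$. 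The delicate point throughout is the rank/injectivity hypotheses on $\bm{\mathcal{B}}$ and on $\extg$ at the discrete level — these are exactly the "no theoretical constraint on mesh sizes" claims from Section~\ref{Discrete}, and I would isolate them as explicit assumptions (the interface meshes $\tau^\phi_i$, $\tau^\psi_i$ are such that the corresponding discrete trace/coupling operators are injective), under which non-singularity of $\bm{\mathcal{S}}$ follows and, the reduced problem being a strictly convex quadratic, its unique stationary point coincides with the unique minimizer of \eqref{minJtilde}.
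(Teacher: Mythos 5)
Your proposal is correct in substance but organized differently from the paper's argument, and the comparison is instructive. The paper reduces Proposition~\ref{discretewellposedness} to Lemma~\ref{lemma1} --- full rank of $\bm{\mathcal{A}^\star}=[\bm{\mathcal{A}}\ \bm{\mathcal{B}}\ -\bm{\mathcal{C}}^\alpha]$ plus positive definiteness of $\bm{\mathcal{G}^\star}$ on $\ker(\bm{\mathcal{A}^\star})$ --- and then cites classical quadratic-programming theory; the lemma itself is proved by parametrizing $\ker(\bm{\mathcal{A}^\star})$ with explicit basis vectors $z_k$ and checking $z_k^T\bm{\mathcal{G}^\star}z_k>0$ one basis vector at a time (a single $\Phi$-mode produces a nonzero trace of $U$ and $\hat U$, a single $\Psi$-mode is seen directly by the $2\bm{G^\psi}$ block after setting $\alpha=\hat\alpha=0$). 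You instead attack the homogeneous KKT system directly: eliminate $W$ and $P$ via the invertibility of $\bm{\mathcal{A}}$, observe that the reduced system is $\nabla\tilde J=0$ for a nonnegative quadratic, deduce $\tilde J=0$ and hence equality of the three traces, and then run an energy identity on the state equations to kill $U,\hat U,\Psi$ before using injectivity of the coupling matrices to kill $\Phi$. These are two presentations of the same fact (positive definiteness of the reduced Hessian $\bm{M}$), but yours buys something real: it treats an \emph{arbitrary} element of $\ker(\bm{\mathcal{A}^\star})$, whereas checking positivity of a semidefinite form on each basis vector separately does not by itself exclude a kernel vector in their span, so your energy argument is the tighter route. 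You also make explicit the mesh-compatibility hypothesis that both arguments secretly need --- $\ker\bm{B}\cap\ker\bm{\hat B}=\{0\}$ (e.g.\ full column rank of $\bm{\hat B}_i$, an inf-sup--type condition between $\tau^\phi_i$ and $\hat{\mathcal{T}}_i$, not a consequence of linear independence of $\{\theta_{i,k}\}$ alone) --- which the paper does not state and which is consistent with its numerical observation that $\delta_\phi$ should be kept below $\hat\delta_u$. Two small repairs to your write-up: the step ``testing with $U$ together with $\bm{\mathcal{B}}^TP=0$-type manipulations'' is misdirected --- the correct mechanism is to test the two homogeneous state equations with $U$ and $\hat U$ and add, so that the coupling terms $U^T\bm{B}\Phi$ and $-\hat U^T\bm{\hat B}\Phi$ cancel precisely because $\trg U=\hat U=\extg\Psi$ on each $\Lambda_i$, leaving $U^T\bm{A}_0U+\hat U^T\bm{\hat A}_0\hat U=0$; and $\Psi=0$ follows simply from $\Psi_i=0$ in $L^2(\Lambda_i)$ and linear independence of $\{\eta_{i,k}\}$, no separate injectivity of $\extg$ is needed there.
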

The proof of Proposition \ref{discretewellposedness} derives from classical arguments of quadratic programming once the following lemma is proven:
\begin{lemma}
\label{lemma1}

Let matrix $\bm{\mathcal{A}^\star}$ be as
\begin{displaymath}
\bm{\mathcal{A}^\star}=
\begin{bmatrix}
\bm{\mathcal{A}} & \bm{\mathcal{B}} & -\bm{\mathcal{C}}^\alpha
\end{bmatrix}
\end{displaymath}
and let $\bm{\mathcal{G}^\star}$ be defined as
\begin{displaymath}
\bm{\mathcal{G}^\star}=
\begin{bmatrix}
\bm{\mathcal{G}} & \bm{0} & -\bm{\mathcal{C}} \\ 
\bm{0} & \bm{0} & \bm{0} \\
-\bm{\mathcal{C}}^T & \bm{0} & 2\bm{G^{\psi}}
\end{bmatrix}
\end{displaymath}
Then matrix $\bm{\mathcal{A}^\star}$ is full rank and matrix $\bm{\mathcal{G}^\star}$ is symmetric positive definite on $\ker(\bm{\mathcal{A}^\star})$.

\end{lemma}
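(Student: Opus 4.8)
The statement splits into an easy part (full rank of $\bm{\mathcal{A}^\star}$) and the substantial part (definiteness of $\bm{\mathcal{G}^\star}$ on $\ker(\bm{\mathcal{A}^\star})$), which I would settle by combining the ``completed square'' form of the discrete cost functional with the constraint equations. For the full-rank claim I would first observe that $\bm{A}$ is symmetric positive definite: $U^{T}\bm{A}U=\int_{\Omega}\bm{K}\nabla U\cdot\nabla U\,d\omega+\alpha\sum_{i=1}^{\I}\int_{\Lambda_i}|\Gamma(s_i)|\,(U|_{\Lambda_i})^{2}\,ds\ge 0$, with equality forcing $\nabla U=0$ and hence $U=0$ through the homogeneous Dirichlet data encoded in $\{\varphi_k\}$. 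The same computation shows each $\bm{\hat{A}}_i$ is SPD, and for a connected group of intersecting centrelines the bordered matrix $\bm{\hat A}^{\star}_{\zeta}$ is nonsingular because it is a saddle-point matrix with SPD leading block and with $\bm{Q}$ of full row rank. Thus $\bm{\mathcal{A}}=\mathrm{diag}(\bm{A},\bm{\hat A})$ is a nonsingular square block, so $\bm{\mathcal{A}^\star}=[\,\bm{\mathcal{A}}\ \ \bm{\mathcal{B}}\ \ -\bm{\mathcal{C}}^{\alpha}\,]$ has full row rank.

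Next I would record the quadratic form of $\bm{\mathcal{G}^\star}$. Symmetry is immediate from the block pattern; for $x=(W,\Phi,\Psi)$ the block structure gives $x^{T}\bm{\mathcal{G}^\star}x=W^{T}\bm{\mathcal{G}}W-2W^{T}\bm{\mathcal{C}}\Psi+2\Psi^{T}\bm{G^{\psi}}\Psi=2\tilde J$, cf.\ \eqref{Jtilde}. Expanding the definitions of $\bm{\mathcal{G}}$, $\bm{\mathcal{C}}$ and $\bm{G^{\psi}}$ segment by segment and completing the square yields
\[
x^{T}\bm{\mathcal{G}^\star}x=\sum_{i=1}^{\I}\Big(\int_{\Lambda_i}\big(U|_{\Lambda_i}-\Psi_i\big)^{2}ds+\int_{\Lambda_i}\big(\hat U_i-\Psi_i\big)^{2}ds\Big)\ \ge\ 0,
\]
so $\bm{\mathcal{G}^\star}$ is positive semidefinite on the whole space; what remains is to exclude degeneracy on $\ker(\bm{\mathcal{A}^\star})$.

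To that end I would take $x=(W,\Phi,\Psi)\in\ker(\bm{\mathcal{A}^\star})$ with $x^{T}\bm{\mathcal{G}^\star}x=0$. The identity above then forces, on every $\Lambda_i$, the pointwise equalities $U|_{\Lambda_i}=\Psi_i=\hat U_i$ (these are piecewise polynomials, so $L^2(\Lambda_i)$-equality is pointwise equality). Testing the constraint $\bm{\mathcal{A}}W+\bm{\mathcal{B}}\Phi-\bm{\mathcal{C}}^{\alpha}\Psi=0$ against $W=(U,\hat U)$ and inserting the matrix definitions, the two interface-flux terms $\sum_i\int_{\Lambda_i}|\Gamma(s_i)|\,U|_{\Lambda_i}\,\Phi_i$ (from $\bm{B}$) and $-\sum_i\int_{\Lambda_i}|\Gamma(s_i)|\,\hat U_i\,\Phi_i$ (from $-\bm{\hat B}$) cancel since $U|_{\Lambda_i}=\hat U_i$, and the $\alpha|\Gamma|$ and $\hat\alpha|\Gamma|$ mass terms cancel against the $\bm{C}^{\alpha},\bm{\hat C}^{\alpha}$ contributions since $U|_{\Lambda_i}=\hat U_i=\Psi_i$; what survives is
\[
\int_{\Omega}\bm{K}\nabla U\cdot\nabla U\,d\omega+\sum_{i=1}^{\I}\int_{\Lambda_i}\bm{\tilde{K}}_i\,|\Sigma(s_i)|\Big(\tfrac{d\hat U_i}{ds}\Big)^{2}ds=0 .
\]
Uniform positive definiteness of $\bm{K}$ and the $\bm{\tilde{K}}_i$ gives $\nabla U=0$, hence $U=0$ by the Dirichlet condition, hence $\Psi_i=U|_{\Lambda_i}=0$ and $\hat U_i=0$ for all $i$. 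Substituting $W=0,\Psi=0$ back into the constraint leaves $\bm{\mathcal{B}}\Phi=0$, and concluding $\Phi=0$ requires $\bm{\mathcal{B}}$ to be injective. In the intersecting case one argues identically on the primal components of $\hat U$ and then uses the full row rank of $\bm{Q}$ to force the continuity multipliers to vanish as well. Thus $x=0$, i.e.\ $\bm{\mathcal{G}^\star}$ is positive definite on $\ker(\bm{\mathcal{A}^\star})$.

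The step I expect to be delicate is precisely the injectivity of $\bm{\mathcal{B}}$: it amounts to a discrete inf--sup/compatibility requirement between the flux partitions $\tau^{\phi}_i$ and the one-dimensional meshes $\hat{\mathcal{T}}_i$ — equivalently, that each $\bm{\hat{B}}_i$ have full column rank, i.e.\ no nonzero element of the $\tau^{\phi}_i$-space be $L^2(\Lambda_i)$-orthogonal to $\mathrm{span}\{\hat\varphi_{i,k}\}$ — which should be ensured by taking the flux spaces no finer than the corresponding primal one-dimensional spaces; every other step is routine manipulation of the bilinear forms together with the cancellations above. Granting the lemma, Proposition~\ref{discretewellposedness} follows from the classical fact that a KKT matrix with full-rank constraint block and reduced Hessian positive definite on the constraint kernel is invertible, its unique solution being exactly the minimizer of \eqref{minJtilde}.
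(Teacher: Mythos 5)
Your argument is correct in outline but follows a genuinely different route from the paper's. The paper (adapting \cite{NostroArxiv}) builds an explicit basis $\{z_k\}$ of $\ker(\bm{\mathcal{A}^\star})$ by pushing each canonical vector $e_k$ of the $(\Phi,\Psi)$-space through $\bm{\mathcal{A}}^{-1}[\bm{\mathcal{B}}\ -\bm{\mathcal{C}}^\alpha]$, and then argues $z_k^T\bm{\mathcal{G}^\star}z_k>0$ case by case ($k$ in the $\Phi$-block versus the $\Psi$-block, using the freedom to set $\alpha=\hat\alpha=0$ in the latter). You instead take an arbitrary kernel element, use the sum-of-squares identity $x^T\bm{\mathcal{G}^\star}x=\sum_i\bigl(\|U|_{\Lambda_i}-\Psi_i\|^2_{L^2(\Lambda_i)}+\|\hat U_i-\Psi_i\|^2_{L^2(\Lambda_i)}\bigr)$ and an energy test of the constraint against $W$ to force $W=0$, $\Psi=0$, and finally $\Phi=0$. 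Your version is tighter on one point: testing positivity only on the basis vectors $z_k$, as the paper does, does not by itself yield definiteness of the quadratic form on their span, whereas your argument treats a general kernel element directly. Both proofs, however, ultimately rest on the same unproven compatibility requirement on the flux discretization: the paper asserts without justification that a nonzero $\Phi$ "gives a non null value $U$ and $\hat U$ on the traces" (which fails precisely when $e_k\in\ker\bm{B}\cap\ker\bm{\hat B}$), and you isolate this as injectivity of $\bm{\mathcal{B}}$, i.e.\ full column rank of the mixed mass matrices coupling $\tau^\phi_i$ with the trace spaces. Making that hypothesis explicit is a genuine improvement — it is consistent with the paper's own numerical observation that $\delta_\phi<\hat\delta_u$ should be chosen — but it is a hypothesis the lemma as stated does not carry, so strictly speaking both your proof and the paper's are conditional on it. Your treatment of the full-rank claim and of the Lagrange-multiplier blocks $\bm{Q}$ in the intersecting case matches what the paper leaves implicit and is fine.
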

\begin{proof}
The proof is adapted from the one provided in \cite{NostroArxiv}, we report here the key steps.
Matrix $\bm{\mathcal{A}}$ is full rank and
matrix $\bm{\mathcal{G}^\star}$ is symmetric positive semi-definite by construction. 
We thus need to show that $\ker{(\bm{\mathcal{G}^\star})} \cap \ker{(\bm{\mathcal{A}^\star})} =\{0 \}$.
Let us consider the canonical basis for $\mathbb{R}^{N^{\Phi}+N^{\Psi}}$ and let $e_k$ denote  the $k$-th element of such basis,  $k=1,\ldots,N^\Phi+N^\Psi$. Let $z_k\in\ker{(\bm{\mathcal{A}^\star})}$ be defined as:
\begin{displaymath}
z_k=\begin{bmatrix}
\bm{\mathcal{A}}^{-1}\begin{bmatrix}
\bm{\mathcal{B}} & -\bm{\mathcal{C}}^\alpha
\end{bmatrix}
e_k\\
e_k
\end{bmatrix}.
\end{displaymath}
Let us assume that $1\leq k\leq N^{\Phi}$, thus corresponding to a non null value of the variable $\Phi$ on one segment. This in turn  gives a non null value $U$ and $\hat{U}$ on the traces and thus a non null value of the functional, or $z_k^T \bm{\mathcal{G}^\star}z_k>0$.
If instead $N^{\Phi}+1\leq k\leq N^{\Psi}$, this corresponds to a non-null variable $\Psi:=e_k$. Since the solution to \eqref{optprob} is the same for every value of $\alpha$ and $\hat{\alpha}$, included $\alpha=\hat{\alpha}=0$ (the consistent terms depending on $\alpha$ and $\hat{\alpha}$ are only required for the independent resolution on the sub-domains), we choose here $\alpha=\hat{\alpha}=0$, so that:  
$$z_k=\begin{bmatrix}
\bm{\mathcal{A}}^{-1}\begin{bmatrix}
\bm{\mathcal{B}} & \bm{0}
\end{bmatrix}
e_k\\
e_k
\end{bmatrix}:=\begin{bmatrix} \bm{0}\\e_k \end{bmatrix}$$
thus $U$, $\hat{U}$ are null, and therefore we can conclude that $z_k^T \bm{\mathcal{G}}z_k>0$ also in this case (see \cite{NostroArxiv} for the proof with $\alpha,\hat{\alpha}>0$).
Summarizing we have shown that  $z_k\not\in\ker{(\bm{\mathcal{G}}^\star)}$ for any $k=1,\ldots, N^{\Phi}+N^{\Psi}$. The vector space $\ker{(\bm{\mathcal{A}}^\star)}=\text{span}\{z_1,\ldots,z^{N^{\Phi}+N^{\Psi}}\}$ is a subspace of $\text{Im}(\bm{\mathcal{G}}^\star)$, and $\ker{(\bm{\mathcal{G}}^\star)} \cap \ker{(\bm{\mathcal{A}}^\star)} =\{\bm{0} \}$.
\end{proof}

System \eqref{optprob} can be used to obtain a numerical solution to the problem. For very large problems instead, the above system is likely to become ill-conditioned, and thus an alternative resolution strategy is proposed, as described below.
By formally replacing
$W=\bm{\mathcal{A}}^{-1}(\bm{\mathcal{B}}\Phi-\bm{\mathcal{C}}^{\alpha}\Psi+\mathcal{F})$
in the functional \eqref{Jtilde}, we obtain
\begin{align}
J^\star(\Phi,\Psi)&=\cfrac{1}{2}\Big( (\bm{\mathcal{A}}^{-1}\bm{\mathcal{B}}\Phi+\bm{\mathcal{A}}^{-1}\bm{\mathcal{C}}^{\alpha}\Psi+\bm{\mathcal{A}}^{-1}\mathcal{F})^T\bm{\mathcal{G}}(\bm{\mathcal{A}}^{-1}\bm{\mathcal{B}}\Phi+\bm{\mathcal{A}}^{-1}\bm{\mathcal{C}}^{\alpha}\Psi+\bm{\mathcal{A}}^{-1}\mathcal{F})+\nonumber\\
&\qquad-(\bm{\mathcal{A}}^{-1}\bm{\mathcal{B}}\Phi+\bm{\mathcal{A}}^{-1}\bm{\mathcal{C}}^{\alpha}\Psi+\bm{\mathcal{A}}^{-1}\mathcal{F})^T\bm{\mathcal{C}}\Psi+\nonumber\\&\qquad-\Psi^T\bm{\mathcal{C}}^T(\bm{\mathcal{A}}^{-1}\bm{\mathcal{B}}\Phi+\bm{\mathcal{A}}^{-1}\bm{\mathcal{C}}^{\alpha}\Psi+\bm{\mathcal{A}}^{-1}\mathcal{F})\Big)=\nonumber\\
&=\cfrac{1}{2}~[\Phi^T \quad \Psi^T]\begin{bmatrix}
\bm{\mathcal{B}}^T\bm{\mathcal{A}}^{-T}\bm{\mathcal{G}}\bm{\mathcal{A}}^{-1}\bm{\mathcal{B}} &\quad \Large\substack{\bm{\mathcal{B}}^T\bm{\mathcal{A}}^{-T}\bm{\mathcal{G}}\bm{\mathcal{A}}^{-1}\bm{\mathcal{C}}^{\alpha}+\\-\bm{\mathcal{B}}^T\bm{\mathcal{A}}^{-T}\bm{\mathcal{C}}}\\\\
\Large\substack{(\bm{\mathcal{C}}^{\alpha})^T\bm{\mathcal{A}}^{-T}\bm{\mathcal{G}}\bm{\mathcal{A}}^{-1}\bm{\mathcal{B}}+\\-\bm{\mathcal{C}}^T\bm{\mathcal{A}}^{-1}\bm{\mathcal{B}}} & \Large\substack{(\bm{\mathcal{C}}^{\alpha})^T\bm{\mathcal{A}}^{-T}\bm{\mathcal{G}}\bm{\mathcal{A}}^{-1}\bm{\mathcal{C}}^{\alpha}+\nonumber \\-\bm{\mathcal{C}}^T\bm{\mathcal{A}}^{-T}\bm{\mathcal{C}}^{\alpha}+\\-(\bm{\mathcal{C}}^{\alpha})^T\bm{A}^{-1}\bm{\mathcal{C}}+2\bm{G^{\psi}}}
\end{bmatrix}\begin{bmatrix}
\Phi\smallskip\\\\\\\Psi
\end{bmatrix}+\\[0.5em]
&\qquad+\mathcal{F}^T\begin{bmatrix}
\bm{\mathcal{A}}^{-T}\bm{\mathcal{G}}\bm{\mathcal{A}}^{-1}\bm{\mathcal{B}} &\quad \bm{\mathcal{A}}^{-T}\bm{\mathcal{G}}\bm{\mathcal{A}}^{-1}\bm{\mathcal{C}}^{\alpha}-\bm{\mathcal{A}}^{-T}\bm{\mathcal{C}}
\end{bmatrix}\begin{bmatrix}
\Phi\\\Psi
\end{bmatrix}+\nonumber \\
&\qquad +\cfrac{1}{2}\left( \mathcal{F}^T\bm{\mathcal{A}}^{-T}\bm{\mathcal{G}}\bm{\mathcal{A}}^{-1}\mathcal{F}\right)= \nonumber \\ 
&=\cfrac{1}{2}\left( \mathcal{X}^T\bm{M}\mathcal{X}+2d^T\mathcal{X}+q\right).\label{Jcompact}
\end{align}
Matrix $\bm{M}$ is symmetric positive definite as follows from the equivalence of this formulation with the previous saddle point system \eqref{optprob}, and thus the minimization of the unconstrained problem \eqref{Jcompact} can be performed via a gradient based scheme. It is however to remark that the computation of gradient direction at point $\mathcal{X}^\sharp$, i.e. $\nabla J^\star(\mathcal{X}^\sharp)=\bm{M}\mathcal{X}^\sharp+d$, can be performed in a matrix free format and involves the independent factorization of the 1D matrices $\bm{\hat{A}_i}$, $i=1,\ldots,L$ and of the 3D elliptic matrix $\bm{A}$, which are all non singular as long as $\alpha, \hat{\alpha}>0$. The analysis of this solving strategy and of its potential for parallel computing is deferred to a forthcoming work.

\section{Numerical results}\label{Num_res}

\begin{figure}
	\centering
	\includegraphics[width=0.80\linewidth]{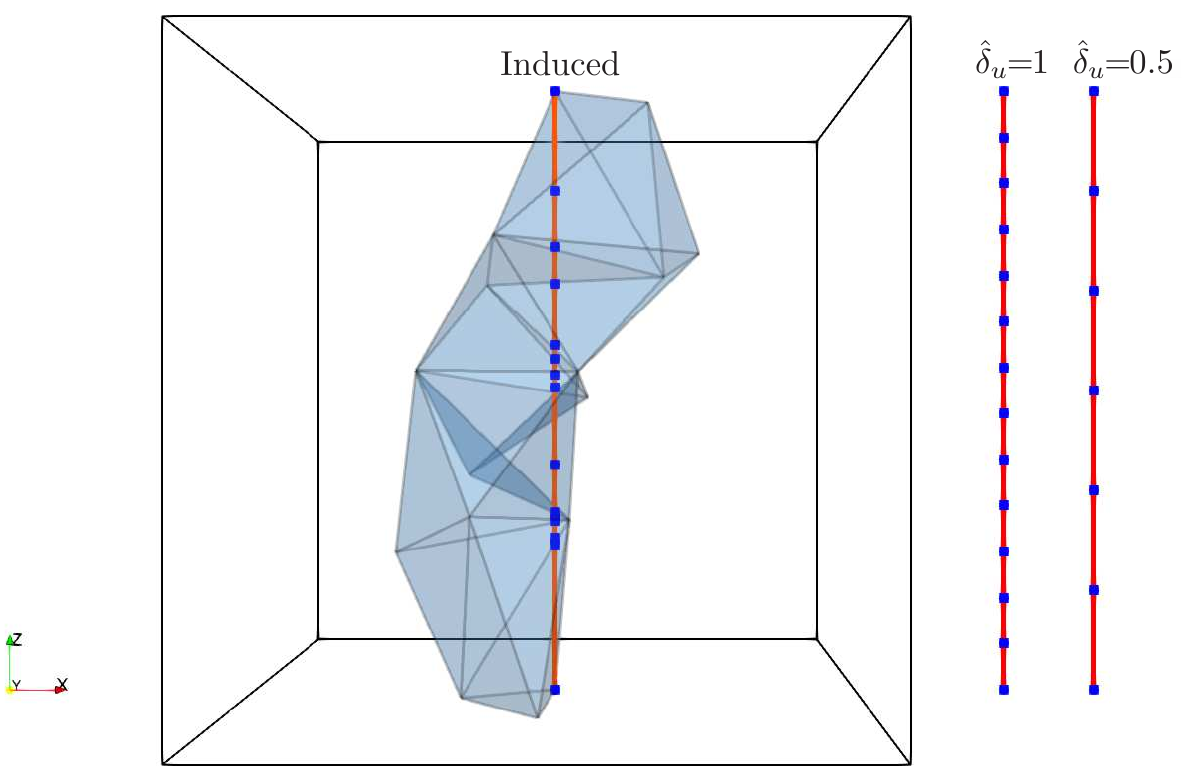}
	\caption{Highlight on the tetrahedra intersected by a segment and consequently induced mesh; on the right equispaced partitions of the segment for $\hat{\delta}_u=1$ and for $\hat{\delta}_u=0.5$.}
	\label{fig:meshes}
\end{figure}

In this section we propose some numerical test to validate the proposed approach and to show its applicability to the problem of interest. Three numerical tests are proposed. A first problem called  \textit{Test Problem 1} (TP1) takes into account a single cylindrical inclusion and has a smooth analytical solution, thus allowing to evaluate convergence trends for the error. The second test, called \textit{Test Problem 2} (TP2),  takes into account a different problem with no known analytical solution on a similar geometry. In this case, the obtained solution is compared to a 3D-3D simulation with standard conforming finite elements, and different values of the coefficient $\bm{\tilde{K}}$ are considered. Finally an example with multiple intersecting inclusions is proposed, to test the behavior of the method in more general settings. In this case, a qualitative evaluation on the behavior of the numerical solution is proposed, along with a quantitative evaluation of a proposed error indicator.

All the simulations are performed using finite elements on 3D and 1D non-conforming meshes, independently generated on the sub-domains. A mesh parameter $h$ is used to denote the maximum diameter of the tetrahedra for the 3D mesh of $\Omega$, whereas the refinement level of the 1D meshes $\hat{\mathcal{T}_i}$, $\tau^{\phi}_i$, $\tau^{\psi}_i$, $i=1,...\I$, is provided in relation to the mesh-size of the 1D mesh induced on the segments $\Lambda_i$ by the tetrahedral mesh, i.e. the 1D mesh given by the intersections of $\Lambda_i$ with the tetrahedra in $\mathcal{T}$, see Figure~\ref{fig:meshes}. This is done in order to better highlight the relative sizes of the various meshes. In particular the adimensional number $\hat{\delta}_{u,i}$ denotes the ratio between the number of elements of the mesh in $\hat{\mathcal{T}_i}$ with respect to the number of elements of the induced mesh on $\Lambda_i$, whereas $\delta_{\phi,i}$ and $\delta_{\psi,i}$ the ratio between the number of elements in $\tau^{\phi}_i$, $\tau^{\psi}_i$, respectively, and the induced mesh. Figure~\ref{fig:meshes} shows the induced mesh, in the middle, and two 1D meshes, e.g. for $\hat{\mathcal{T}}$ corresponding to values of $\hat{\delta}_{u}=1$ and $\hat{\delta}_{u}=0.5$ and equally-spaced nodes. In the simulations, for simplicity, we will always use equally-spaced nodes for the 1D meshes and unique different values of $\hat{\delta}_{u}$, $\delta_{\phi}$ and $\delta_{\psi}$ for the various segments, thus dropping, in the following, the reference to segment index for these parameters. Linear Lagrangian finite elements are used on $\mathcal{T}$, $\hat{\mathcal{T}_i}$, $\tau^{\psi}_i$, whereas piece-wise constant basis functions are used to describe variables $\phi_i$ on $\tau^{\phi}_i$,  $i=1,...\I$. All numerical tests are performed using $\alpha=\hat{\alpha}=1$, even if it is to remark that the value of such parameters has no impact on the solution, and, as long as formulation \eqref{optprob} is used, $\alpha=\hat{\alpha}=0$ could have been also chosen. 

\subsection[Test1]{Test Problem 1 (TP1)}
Let us consider a cube $\Omega$ of edge $l$ inscribed in a cylinder of radius $\hat{R}=\frac{l\sqrt{2}}{2}$ centered in the axis origin and a cylinder $\Sigma$ of radius $\check{R}<\hat{R}$ and height $h=l$ whose centreline $\Lambda$ lies on the $z$ axis (see Figure~\ref{section_view}). Let us denote by $\partial \Omega_{l}$, $\partial \Omega_{+}$ and $\partial \Omega_{-}$ respectively the lateral, the top and the bottom faces of the cube.
\begin{figure}
	\centering
	\includegraphics[width=0.3\linewidth]{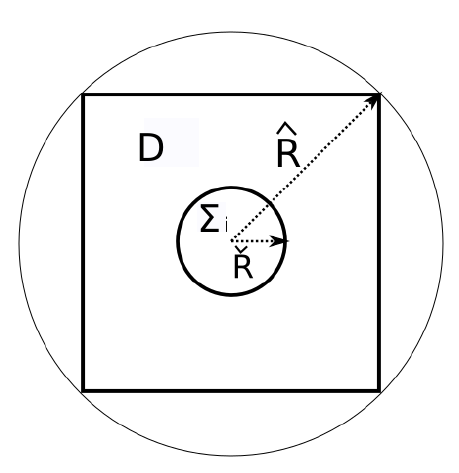}\hspace{10mm}%
	\includegraphics[width=0.45\linewidth]{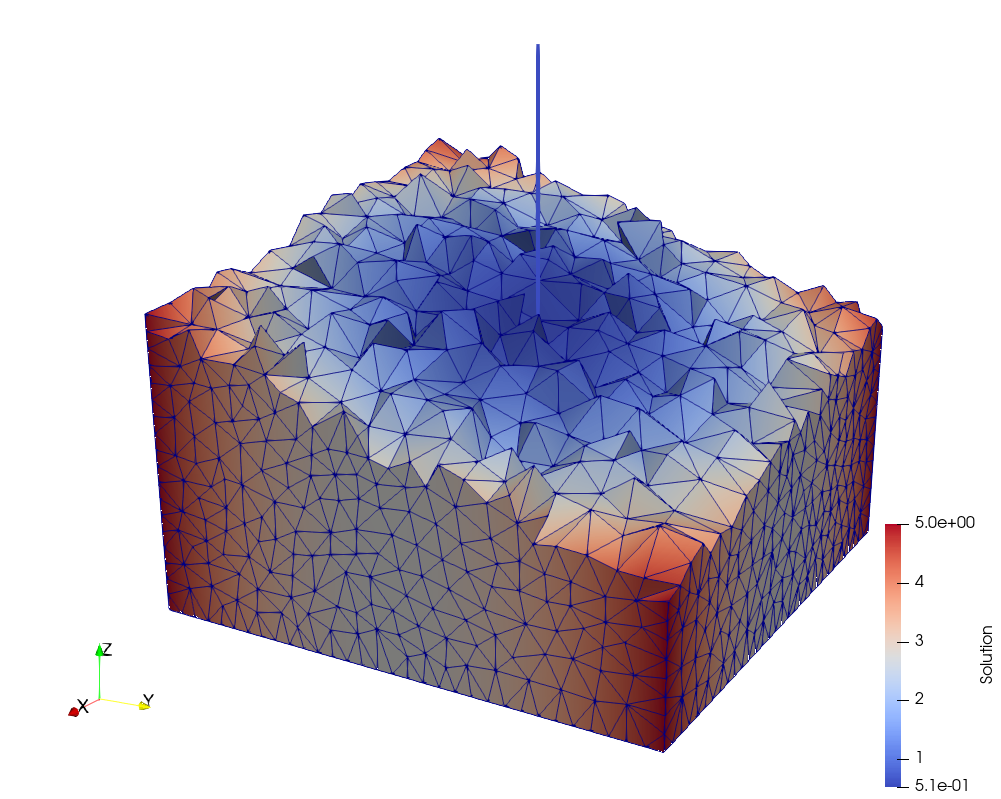}
	\caption{TP1: on the left, section view of the starting 3D-3D domain for the TestProblem1 experiment; on the right solution obtained inside the cube and on the segment for $h=0.086$, $\hat{\delta}_u=1$, $\delta_{\phi}=0.5$ and $\delta_{\psi}=0.5$.}
	\label{section_view}
\end{figure}
Given $a,b,c,k_1,k_2 \in \mathbb{R}$, let us consider a problem in the form of \eqref{equaz1}-\eqref{condiz2}, obtained by reducing $\Sigma$ to its centerline, with $K=\tilde{K}=1$ and
\begin{align*}
f=-\cfrac{b}{\sqrt{x^2+y^2}}-4a, \qquad \overline{\overline{g}}=0.
\end{align*}

The problem is completed with the appropriate boundary condition to have the exact solution given by:
\begin{align}
&u_{ex}(x,y,z)=a(x^2+y^2)+b\sqrt{x^2+y^2}+c \label{uestest1} & \text{ in } \Omega\\
&\hat{u}_{ex}(x,y,z)=k_1&\label{uhatestest1} \text{ on } \Lambda
\end{align} with
\begin{equation*}
a=\cfrac{k_2-k_1}{(\hat{R}-\check{R})^2}, \quad b=\cfrac{-2\check{R}(k_2-k_1)}{(\hat{R}-\check{R})^2}, \quad c=k_1+\cfrac{(k_2-k_1)\check{R}^2}{(\hat{R}-\check{R})^2}
\end{equation*}
This reduced problem corresponds to an equi-dimensional problem satisfying our modeling assumptions and having a constant solution equal to $k_1$ inside the cylinder. Further, the flux through the interface is zero, as the solution is $C^1$ in the whole domain.
Results are obtained considering a cube of edge $l=2$ ($\hat{R}=\sqrt{2}$) and choosing $\check{R}=0.01$, $k_1=0.5$ and $k_2=5$. Homogeneous Neumann boundary conditions are imposed on $\partial\Omega_{+}$ and $\partial \Omega_{-}$, whereas Dirichlet boundary conditions are imposed on $\partial\Omega_l$. Dirichlet boundary conditions equal to $k_1$ are imposed on segment endpoints.
Figure~\ref{section_view} on the right shows the approximated solutions $U$, $\hat{U}$ obtained inside the cube and on the segment for $h=0.086$ and $\hat{\delta}_u=1$, corresponding to $N=3715$ DOFs in the cube and $\hat{N}=57$ DOFs on the segment. The other parameters are $\delta_{\phi}=0.5$ and $\delta_{\psi}=0.5$. Convergence curves of the error can be computed and, given the regularity of the solution, optimal convergence trends are expected for the used finite element approximation. Let us introduce the errors $\mathcal{E}_{L^2}$, $\mathcal{E}_{H^1}$, for the 3D problem and $\widehat{\mathcal{E}}_{L^2}$ and $\widehat{\mathcal{E}}_{H^1}$ for the 1D problem, defined as follows:
 \begin{align*}
 &\mathcal{E}_{L^2}=\cfrac{||u_{ex}-U||_{L^2(\Omega)}}{||u_{ex}||_{L^2(\Omega)}} ,\qquad \mathcal{E}_{H^1}=\cfrac{||u_{ex}-U||_{H^1(\Omega)}}{||u_{ex}||_{H^1(\Omega)}},\\
  &\widehat{\mathcal{E}}_{L^2}=\cfrac{||\hat{u}_{ex}-\hat{U}||_{L^2(\Lambda)}}{||\hat{u}_{ex}||_{L^2(\Lambda)}}, \qquad \widehat{\mathcal{E}}_{H^1}=\cfrac{||\hat{u}_{ex}-\hat{U}||_{H^1(\Lambda)}}{||\hat{u}_{ex}||_{H^1(\Lambda)}}.
 \end{align*}
Figure~\ref{errtest1} displays the convergence trends for the above quantities against mesh refinement. Four meshes are considered, characterized by mesh parameters $h=0.215,0.136,0.086,0.054$ respectively, corresponding to $N=229,933,3715,14899$ DOFs and $\hat{N}=20,31,57,79$ DOFs ($\hat{\delta}_u=1$, $\delta_{\phi}=\delta_{\psi}=0.5$), confirming the expected behaviors. 

\begin{figure}
	\centering
	\includegraphics[width=0.45\linewidth]{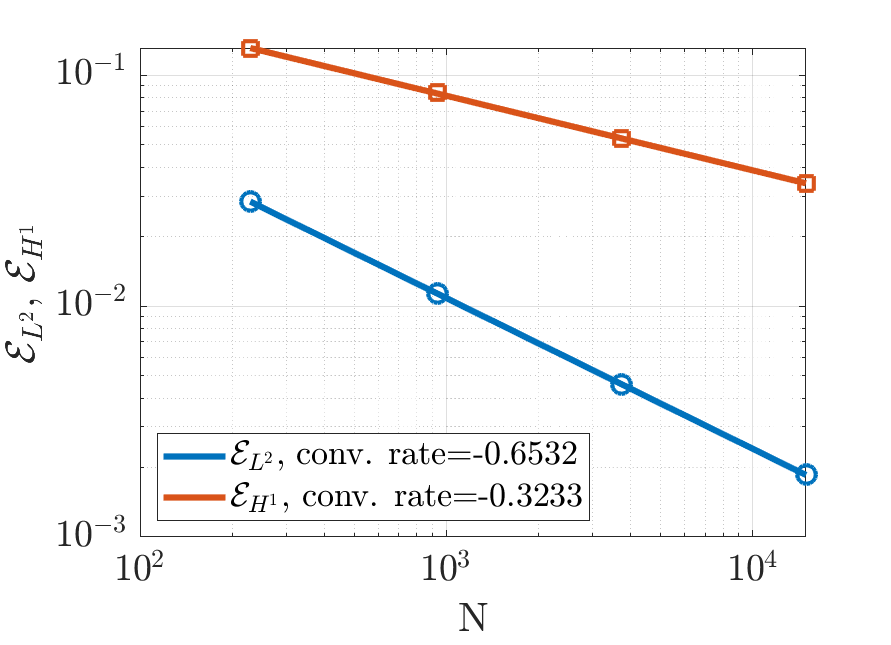}%
	\includegraphics[width=0.45\linewidth]{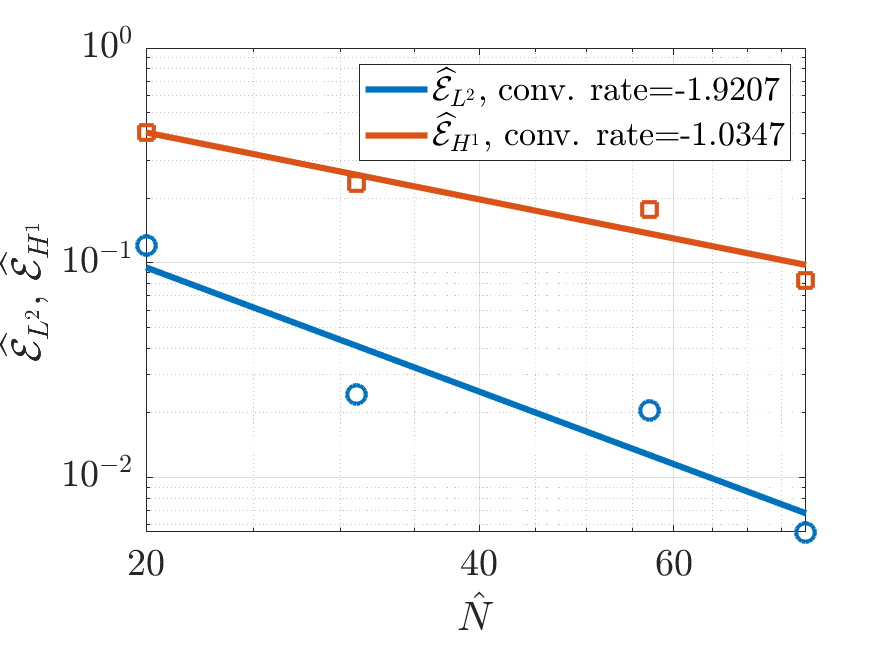}
	\caption{TP1: trend of the $L^2$ and $H^1$-norms of the relative errors under mesh refinement. On the left: error committed on the cube with respect to (\ref{uestest1}); on the right: error committed on the segment with respect to \eqref{uhatestest1}. Other parameters: $\hat{\delta}_{u}=1$, $\delta_{\psi}=\delta_{\phi}=0.5$.}
	\label{errtest1}
\end{figure}

For this simple problem it is possible to compute the condition number of the KKT optimality conditions \eqref{KKT}, and analyze how it is affected by different choices of the meshsize of the 1D meshes.  
Figure~\ref{cond1}, on the left, reports the condition number of the KKT system matrix as $\delta_\phi$ varies between $0.1$ and $1$, for five different values of $\hat{\delta}_u$ between $0.6$ and $1.4$, being instead $\delta_\psi=0.5$ fixed. We can see that $\delta_\phi$ has a relatively small impact on the conditioning of the system as long as $\delta_\phi<\hat{\delta}_u$ is chosen, otherwise a large rapid increase is observed as $\delta_\phi\geq\hat{\delta}_u$ grows.
It is therefore advisable to choose a quite coarse mesh for variable $\Phi$ with respect to the mesh for $\hat{U}$, even if no theoretical constraints emerged in the analysis. Figure~\ref{cond1}, on the right, shows again the conditioning of the KKT system matrix as $\delta_\psi$ varies between $0.1$ and $1$, for the same five values of $\hat{\delta}_u$ between $0.6$ and $1.4$, keeping this time $\delta_\phi=0.5$ fixed. It can be seen that, in this case, the conditioning is almost independent of $\delta_\psi$, for all the considered values of $\hat{\delta}_u$. It is however to remark that saddle point matrices as the one in \eqref{KKT} are typically ill conditioned. The use of a resolution strategy based on a gradient based scheme for the minimization of the unconstrained functional is expected to result in a problem with a mitigated condition number, actually coinciding with the application of a null-space based preconditioning technique \cite{Pestana2016}.

\begin{figure}
	\centering
	\includegraphics[width=0.44\linewidth]{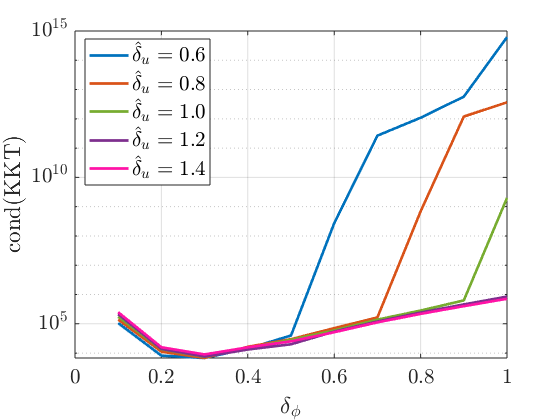}%
	\includegraphics[width=0.45\linewidth]{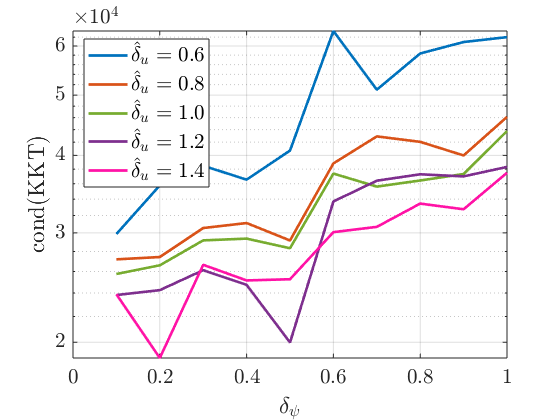}
	\caption{TP1: trend of the conditioning of the KKT system under the variation of the 1D mesh parameters. On the left variable $\delta_{\phi}$ and different values of $\hat{\delta}_u$, while $\delta_{\psi}=0.5$. On the right variable $\delta_{\psi}$ and $\delta_{\phi}=0.5$. In both cases $h=0.086$.}
	\label{cond1}
\end{figure}

\subsection{Test Problem 2 (TP2)}
The second example is set on a domain equal to the one of TP1. We consider three different problems defined as in \eqref{equaz1}-\eqref{condiz2}, characterized by three different values of the coefficient $\bm{\tilde{K}}$, equal to $1$, $10^2$ and $10^5$ respectively, whereas $\bm{K}=1$, $f=1$, $\overline{\overline{g}}=0$ are fixed for all the problems. Even the boundary conditions are shared: being $\partial\Omega_{+}$, $\partial\Omega_{-}$, and $\partial \Omega_{l}$ defined as previously, homogeneous Dirichlet boundary conditions are prescribed on $\partial\Omega_{+}$, $\partial\Omega_{-}$ and at segment endpoints, while homogeneous Neumann boundary conditions are set on $\partial \Omega_{l}$.

\begin{figure}
\centering
\includegraphics[width=0.99\textwidth]{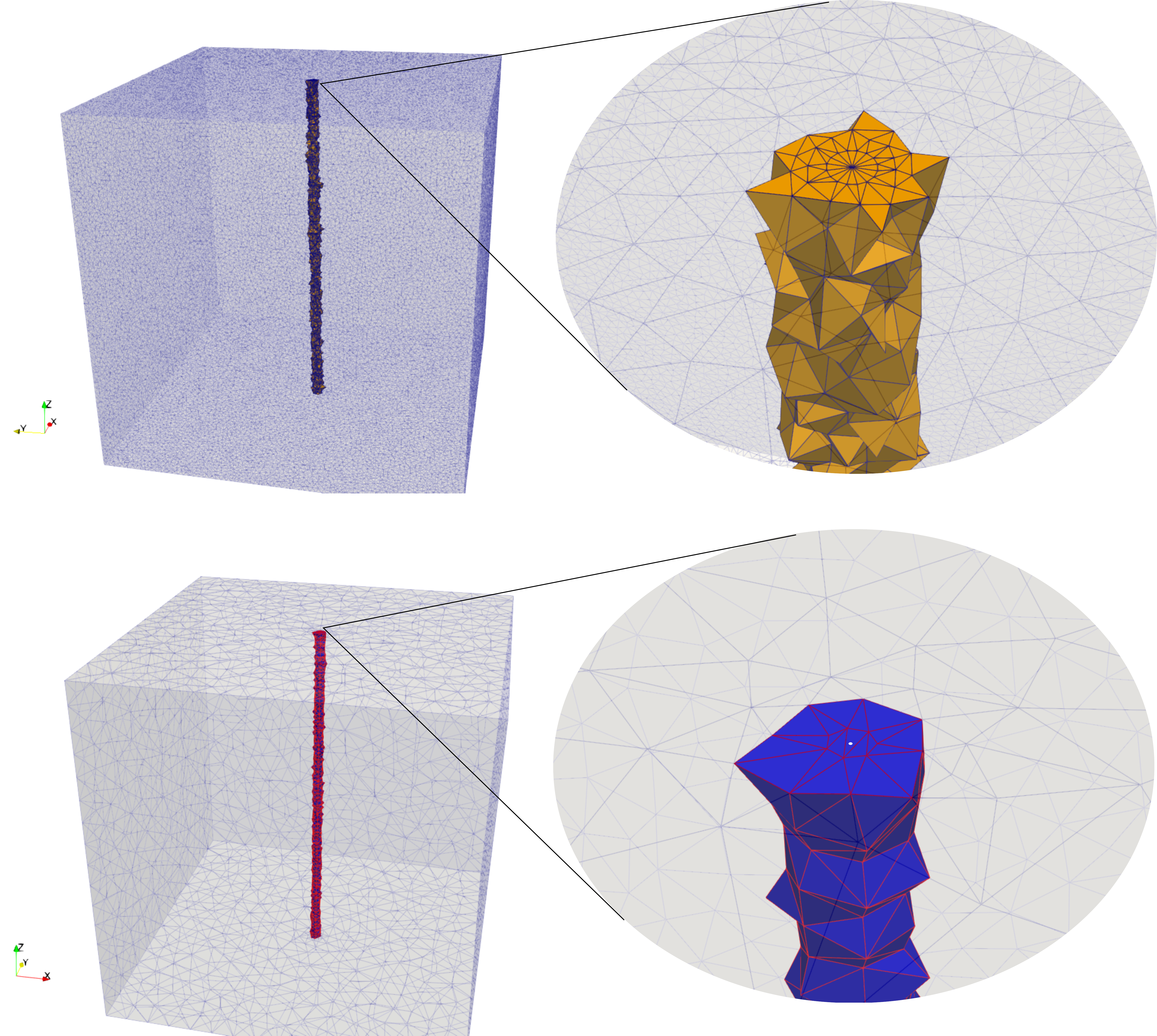}
\caption{TP2: Top: mesh used for the reference equi-dimensional problem, conforming to the cylindrical inclusion. Bottom: adapted non conforming mesh for the 3D-1D reduced problem with the proposed approach}
\label{3DMesh1}
\end{figure}

\begin{figure}
\centering
\includegraphics[width=0.45\textwidth]{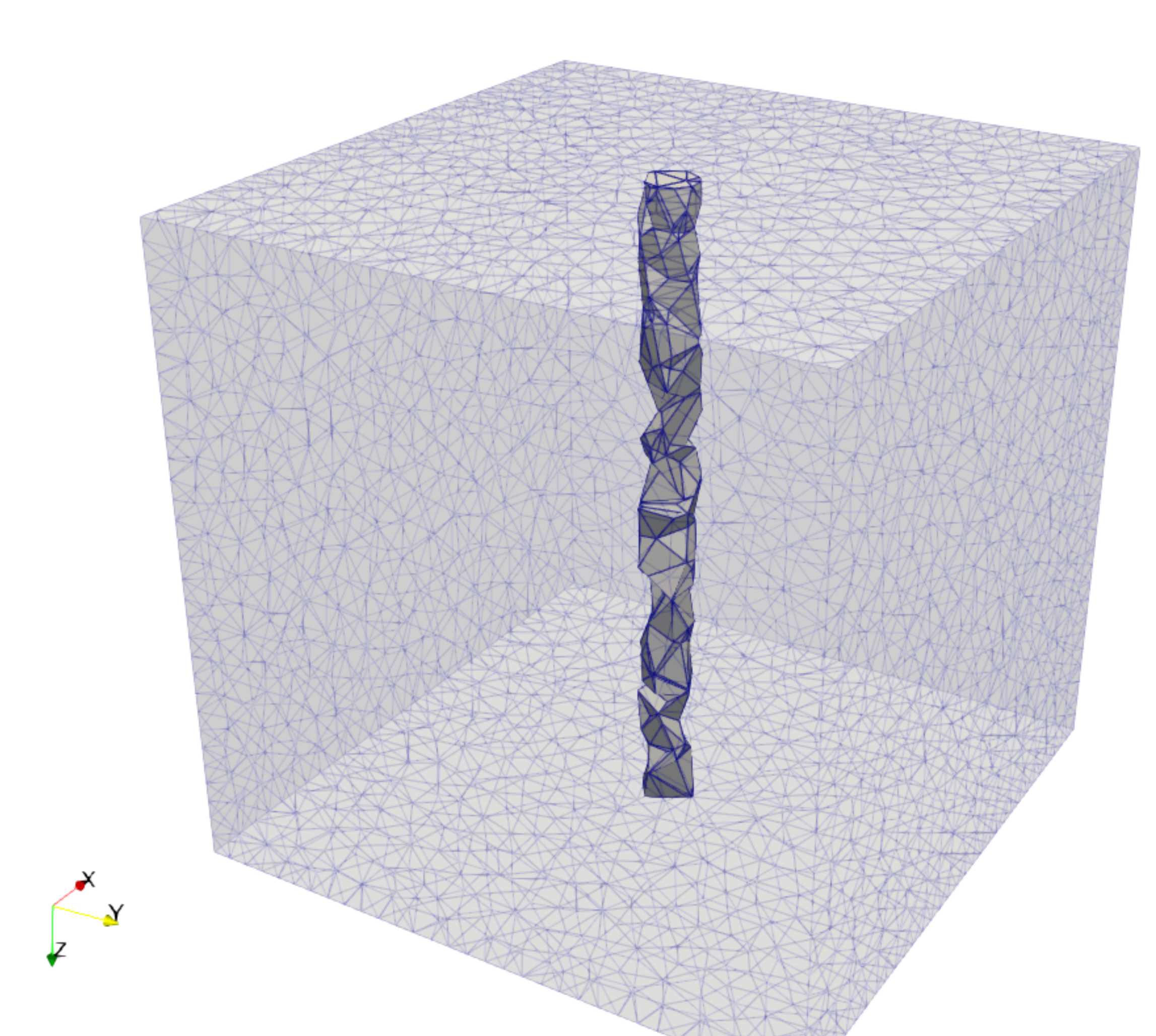}
\caption{TP2: Uniformly refined mesh with $h=0.086$.}
\label{3DMesh2}
\end{figure}

\begin{figure}
	\centering
	\includegraphics[width=0.45\linewidth]{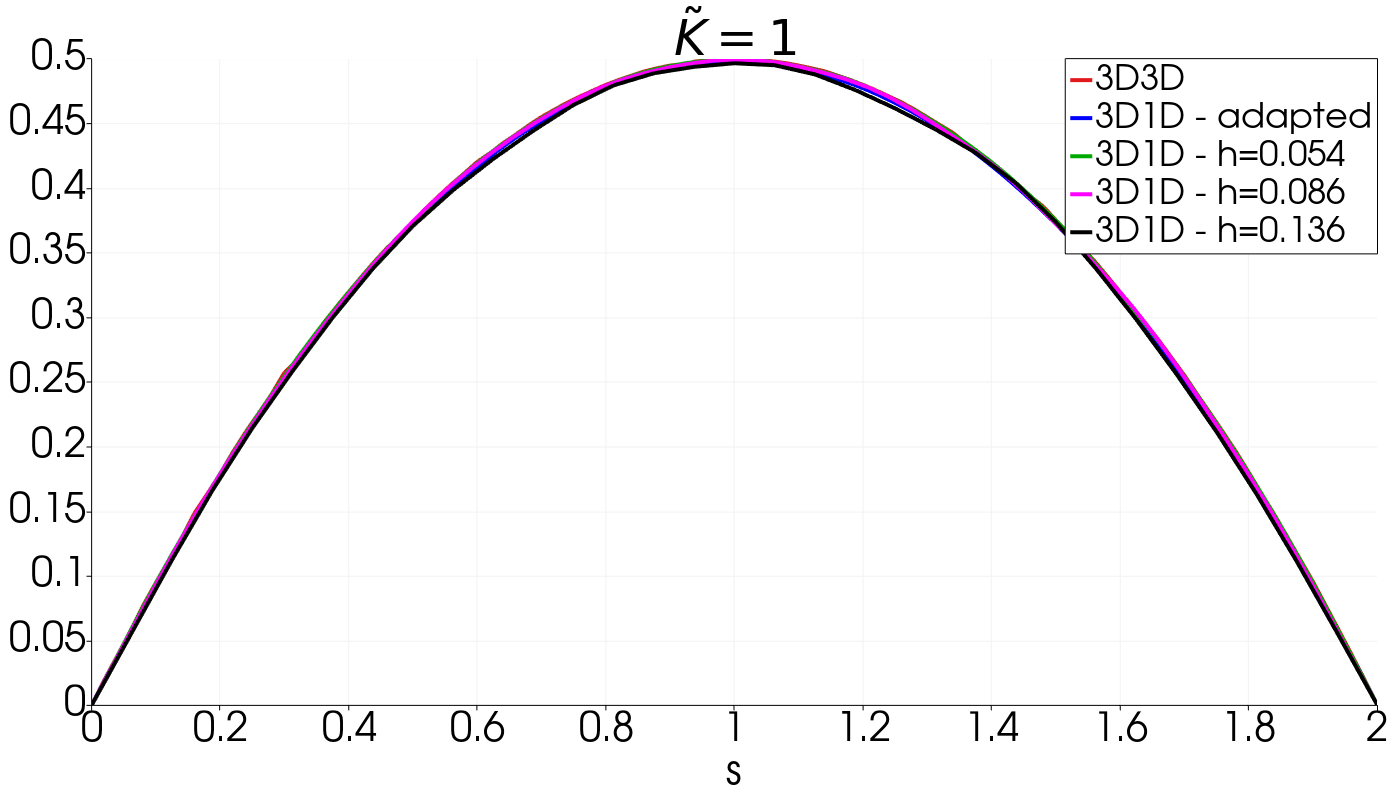}\hspace{5mm}%
	\includegraphics[width=0.45\linewidth]{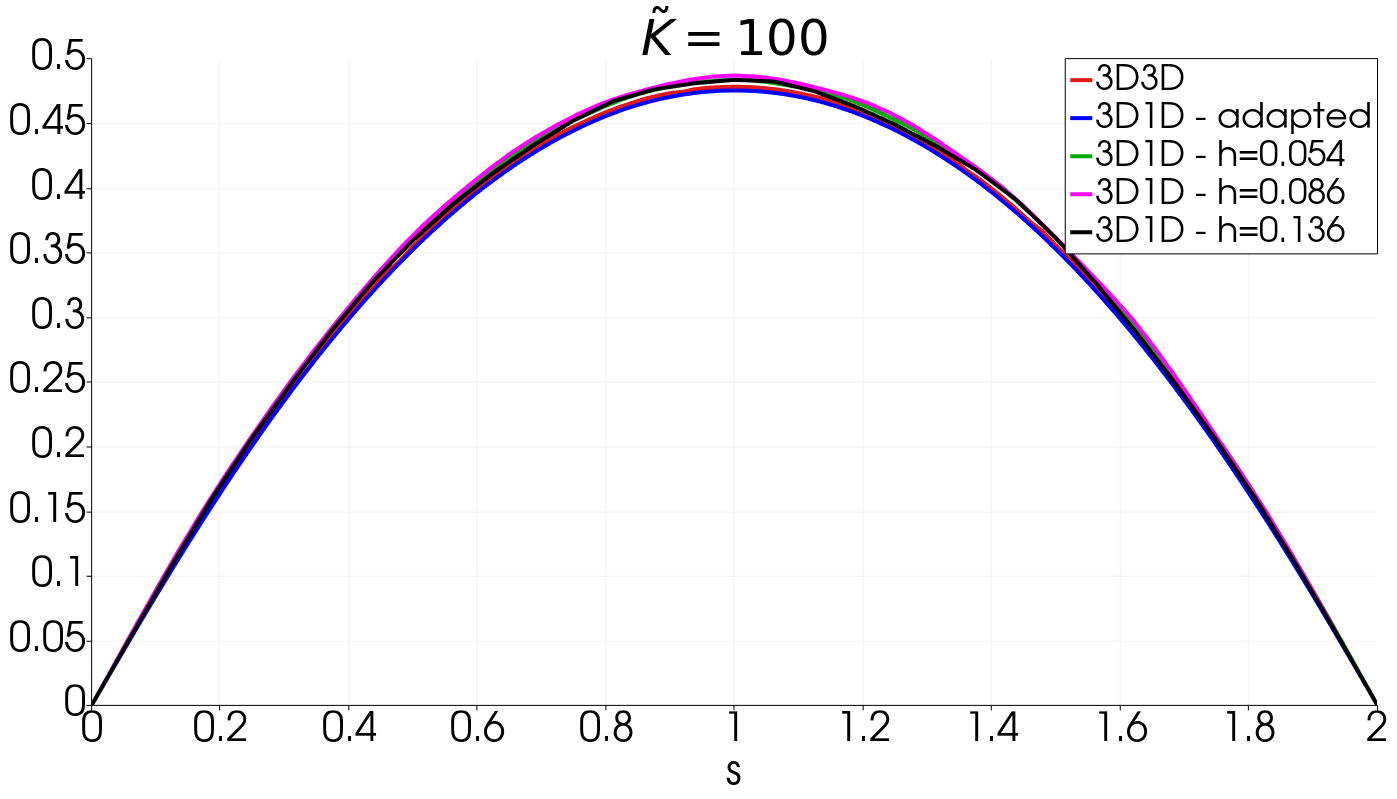}\medskip\\
	\includegraphics[width=0.45\linewidth]{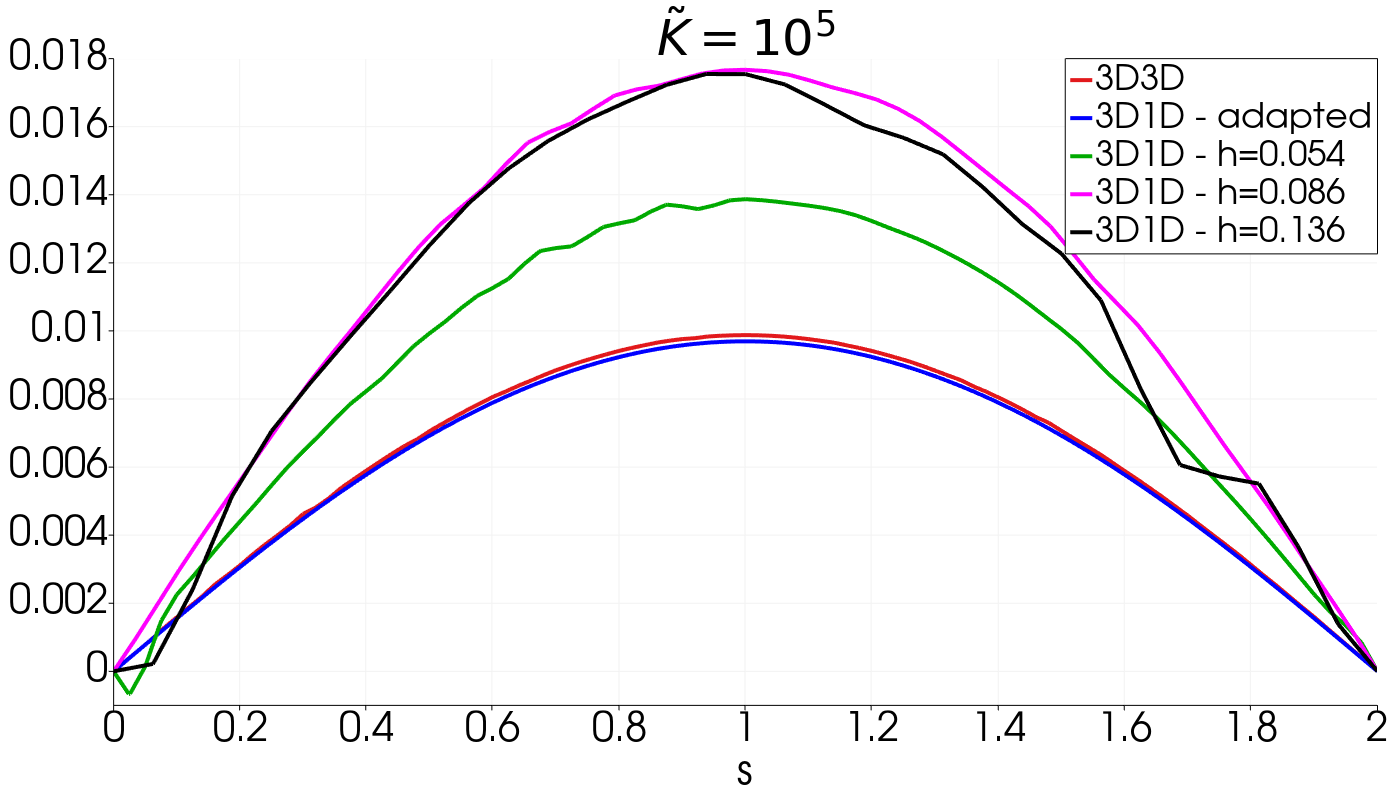}%
	\caption{TP2: comparison of the results obtained along the centerline of the cylinder in the 3D-3D conforming setting and by using the 3D-1D reduced model. Other parameters for the 3D-1D setting: $\hat{\delta}_u=1$ and $\delta_{\phi}=\delta_{\psi}=0.5$.}
	\label{confronto1D}
\end{figure}

\begin{figure}
\centering
\includegraphics[width=0.45\textwidth]{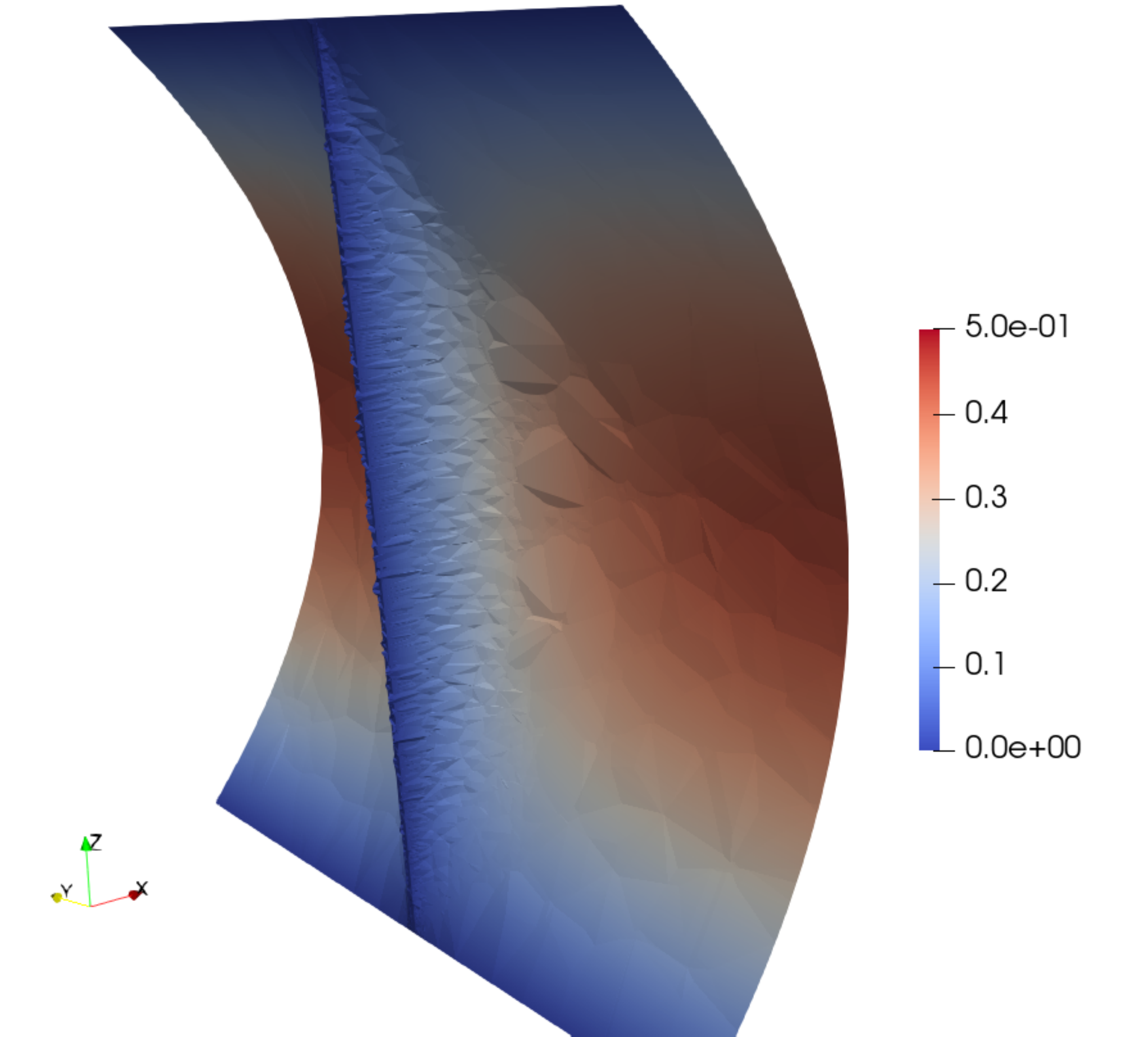}
\caption{TP2: comparison of the solution obtained on the adapted mesh for the 3D-1D problem with the reference solution on a plane parallel to $y-z$ and containing the centreline of the inclusion. $\bm{\tilde{K}}=10^5$.}
\label{SolComparison}
\end{figure}

The accuracy of the solution is evaluated by means of a comparison with an equi-dimensional problem having a cylindrical inclusion of radius $0.01$, with centreline coinciding with the 1D domain of the reduced problem. Dirichlet homogeneous boundary conditions are prescribed on the top and bottom faces of the 3D domains and homogeneous Neumann boundary conditions are set on the outer surface. A unitary forcing term is prescribed in the 3D domain outside of the cylindrical inclusion, where, instead a null forcing is set. As $\bm{\tilde{K}}$ grows, we move from a problem with a smooth solution to a problem with a jump of the gradient across the interfaces between the bulk 3D domain and the inclusion. 

The equi-dimensional problem is solved on a fine mesh, refined around the inclusion in order to match the geometry. The lateral surface of the cylindrical inclusion is represented as an extruded regular hexadecagon. The used mesh, along with a detail near to the inclusion, is shown in Figure~\ref{3DMesh1}, top. As this picture shows, the mesh is strongly refined close to the inclusion, in order to correctly catch its geometry and counts about $1.6$ million elements and $226288$ DOFs.

The reduced 3D-1D problem is solved on four different meshes: first a non conforming mesh, slightly refined close to the inclusion area, is considered, termed adapted mesh and having $2.8\times 10^4$ elements and $4890$ DOFs. This mesh is thus much coarser than the reference mesh. It is shown in Figure~\ref{3DMesh1}, at the bottom, along with a zooming of the zone around the 1D domain to highlight the non conformity. Further, three uniformly refined meshes are considered, with mesh parameters $h=0.136,0.086,0.054$, respectively, corresponding to $N=1287,4609,17164$ DOFs. The intermediate uniform mesh is shown in Figure~\ref{3DMesh2}.

The solution on the centreline of the inclusion obtained on the various considered meshes are compared to the reference solution on the centreline in Figure~\ref{confronto1D}, for $\bm{\tilde{K}}=1$ on the top left, $\bm{\tilde{K}}=100$ on the top right, and for $\bm{\tilde{K}}=10^5$ on the bottom. We can clearly see that, as long as the jump in the coefficient between the bulk domain and the inclusion is relatively small, the proposed approach correctly reproduces the solution on all the considered meshes. Instead, for large jumps, as it is for $\bm{\tilde{K}}=10^5$, the solution on the uniformly refined meshes are less accurate, whereas, the use of a slightly adapted mesh, even if still non conforming, is capable of producing a solution in very good agreement with the reference. The proposed approach can be thus of help in mitigating the overhead in mesh generation and to reduce problem size. A comparison between the reference solution and the solution of the reduced 3D-1D problem on the adapted mesh and with $\bm{\tilde{K}}=10^5$ is finally shown in Figure~\ref{SolComparison}, on a slice parallel to the $y-z$ plane and containing the centreline. The plot of the two solutions match well.

\subsection{Test with multiple intersecting inclusions (MI)}
As for the previous cases, let us consider a cube of edge $l=2$ centered in the axes origin. Let us then consider a set of $19$ inclusions of radius $\check{R}=10^{-2}$, whose centerlines intersect in $9$ points. We impose homogeneous Dirichlet boundary conditions on all the faces of the cube and at the dead ends of the network intersecting cube top and bottom faces, as shown in Figure~\ref{config19seg}. Homogeneous Neumann boundary conditions are imposed at  segment endpoints lying inside the cube. We consider a problem in the same form of \eqref{eq1loc}-\eqref{eq2loc}, with $i=1,...,\mathcal{I}=19$, spanning the segments, that form a unique connected component, as discussed in Section~\ref{Discrete}. Further, we consider $\bm{K}=1$, $f=0$ and $\bm{\tilde{K}}_i=100$, $\overline{\overline{g}}_i=3.14e-2$ $\forall i=1,...,19$.
\begin{figure}
	\centering
	\includegraphics[width=0.6\linewidth]{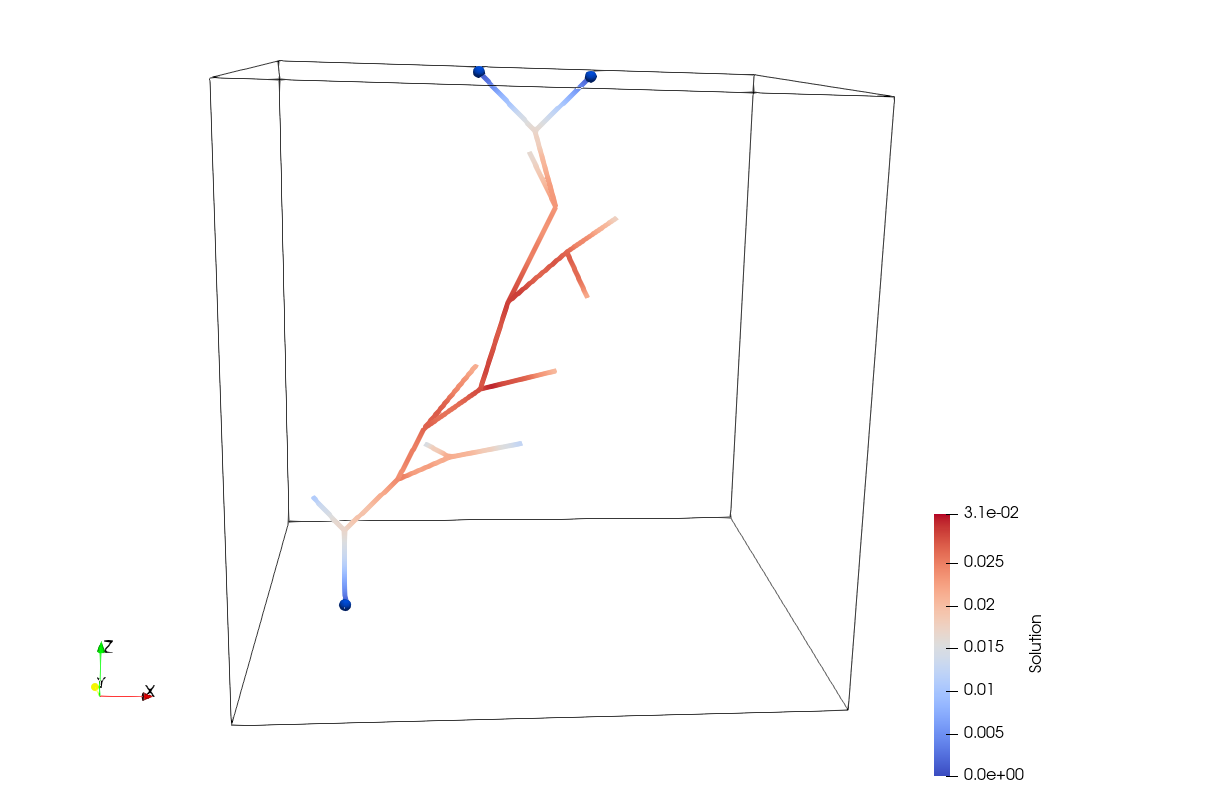}
	\caption{MI: Solution obtained on the centerlines of the inclusions for $h=0.054$, $\hat{\delta}_u=1$ and $\delta_{\phi}=\delta_{\psi}=0.5$. Homogeneous Dirichlet boundary conditions are imposed at the points marked in blue.}
	\label{config19seg}
\end{figure}

We refer again to Figure~\ref{config19seg} for the solution $\hat{U}$ obtained in the segment network for $h=0.056$ and $\hat{\delta}_{u}=1$, corresponding to $N=12873$ and $\hat{N}=309$ DOFs. The other parameters are $\delta_{\phi}=\delta_{\psi}=0.5$. Figure~\ref{sliceswarp}, on the left, shows instead, for the same choice of parameters, the solution $U$ obtained inside the cube, on three different slices, all parallel to the $x-y$ plane and located at $z=-0.5$, $z=0$ and $z=0.5$. On the right of Figure~\ref{sliceswarp} the behavior of the vector field $-\bm{K}\nabla U$ is shown.
\begin{figure}
	\centering
	\includegraphics[width=0.5\linewidth]{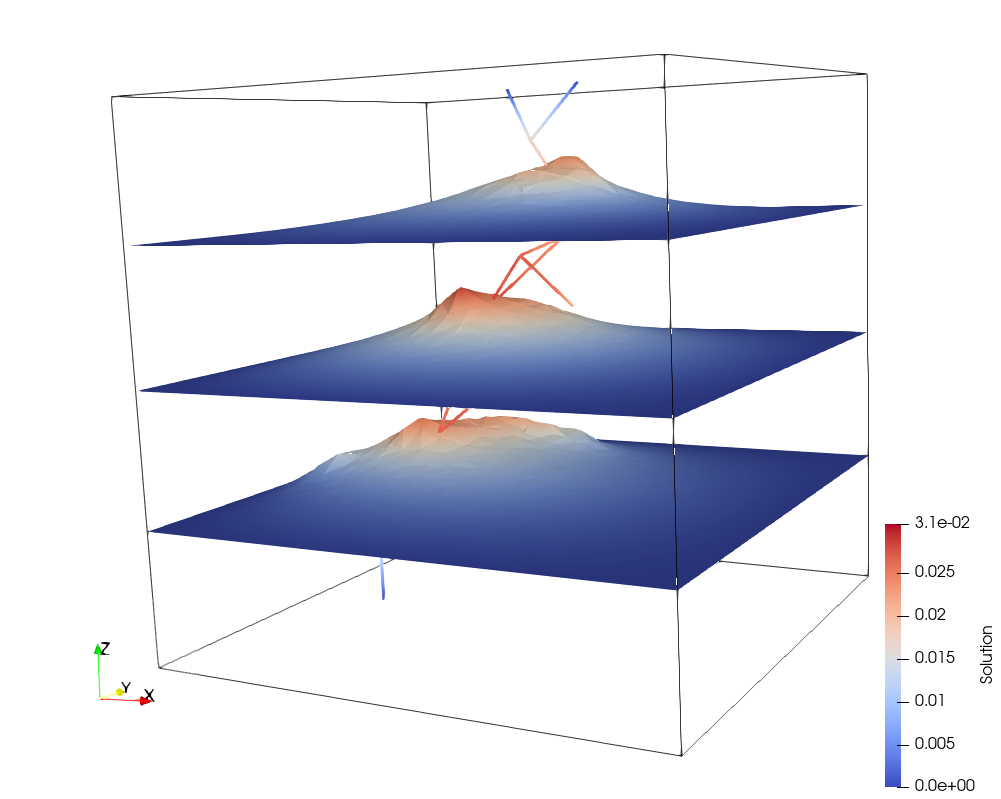}%
	\includegraphics[width=0.5\linewidth]{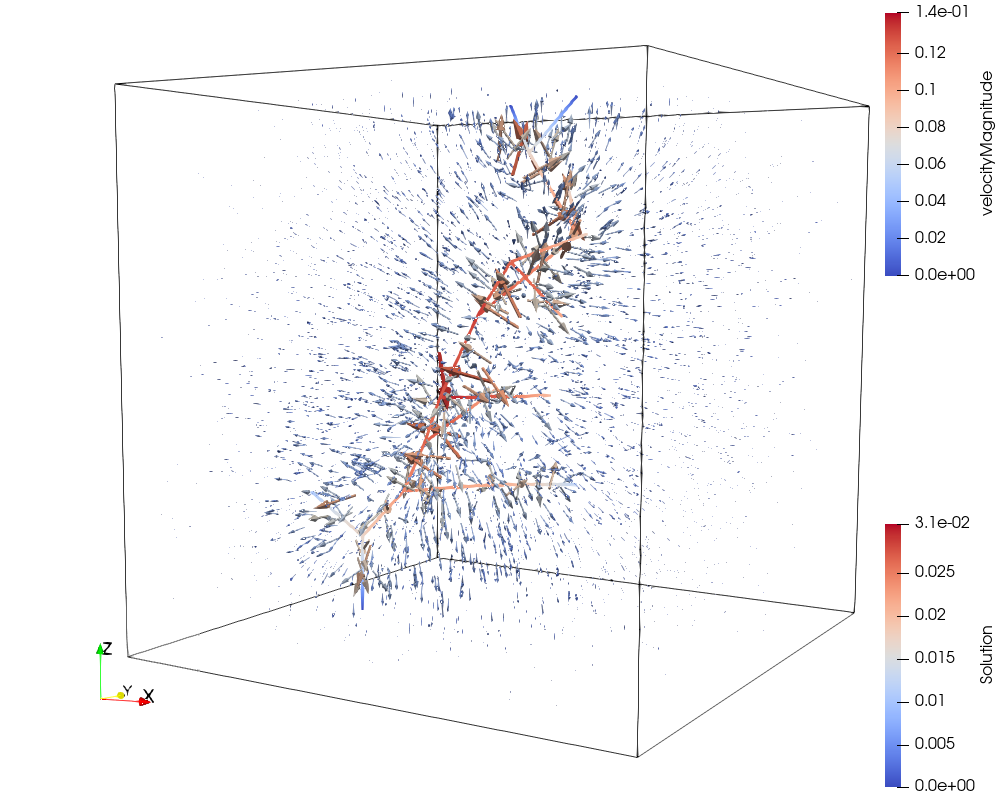}
	\caption{MI: On the left, solution obtained inside the cube on three different slices parallel to the $x-y$ plane and located at $z=-0.5$, $z=0$ and $z=0.5$. On the right focus on the behavior of the $-\bm{K}\nabla U$ vector field. The chosen mesh parameters are $h=0.054$, $\hat{\delta}_u=1$, $\delta_{\phi}=\delta_{\psi}=0.5$. }
	\label{sliceswarp}
\end{figure}

A quantitative evaluation of the numerical solution is provided through an error indicator, denoted by $\Delta_u^{L^2}$, measuring how well the continuity condition, enforced through the minimization of the functional \eqref{Jtilde}, is satisfied. We thus introduce the quantity:
\begin{equation}
\Delta_u^{L^2}=\cfrac{\sqrt{\sum_{i=1}^{\I}||U_{|_{\Lambda_i}}-\hat{U}_i||^2_{L^2(\Lambda_i)}}}{|\max(U,\hat{U})|\sqrt{l_{tot}}} \label{cont}
\end{equation}
resulting in a non dimensional number, with $l_{tot}$ denoting the total length of the segments in the domain. 

The trend of $\Delta_u^{L^2}$ against $\delta_{\phi}$ and $\delta_{\psi}$, both ranging between $0.1$ and $1$, is shown in Figure~\ref{contMI} on the left, still considering $h=0.054$ and ${\hat{\delta}_{u}}=1$. As expected, the error indicator decreases as the two parameters increase. The impact of $\delta_{\phi}$ appears to be stronger: for values of $\delta_{\psi}$ close to $1$, almost two orders of magnitude are swept by $\Delta_u^{L^2}$ as $\delta_{\phi}$ varies. The impact of $\delta_{\psi}$ on the continuity condition appears to be weaker, even if it can be seen that it becomes more relevant for high values of $\delta_{\phi}$, with almost one order of magnitude swept by the error indicator as $\delta_{\psi}$ increases. Figure~\ref{contMI}, on the right, shows instead the trend of $\Delta_u^{L^2}$ against $\hat{\delta}_u$, ranging between 0.6 and 2, for four values of the mesh size $h$, namely $h=0.215,0.136,0.086,0.054$, corresponding to $N=126,646,2951,12873$. The other parameters are $\delta_{\phi}=\delta_{\psi}=0.5$. We can see that the continuity error indicator is only marginally affected by the value of $\hat{\delta}_u$, whereas, it can be arbitrarily reduced by mesh refinement. It should be noted, however, that, for a fixed value of $\hat{\delta}_u$, a refinement of the 3D mesh also implies a refinement of the 1D mesh for $\hat{u}$, whereas, changes in $\hat{\delta}_u$, at fixed $h$ only refine the mesh of $\hat{u}$, leaving the 3D mesh unchanged.
\begin{figure}
	\centering
	\includegraphics[width=0.45\linewidth]{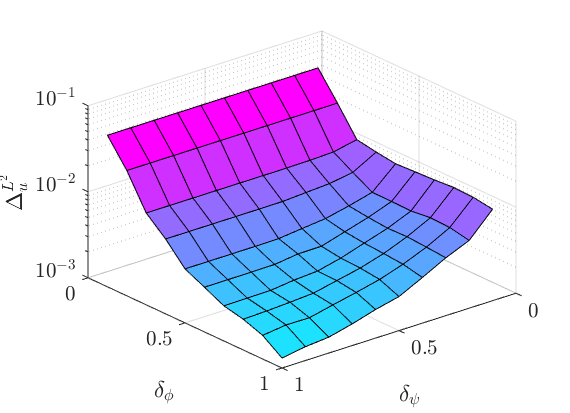}%
	\includegraphics[width=0.45\linewidth]{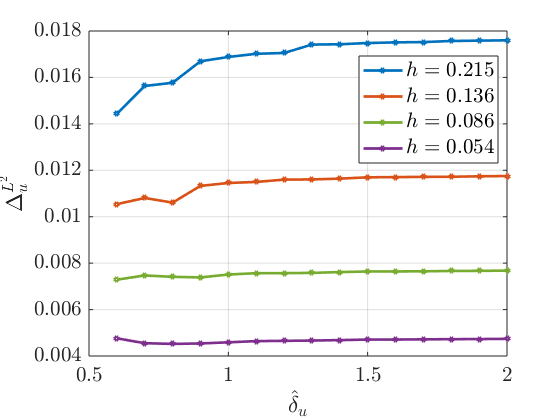}
	\caption{MI: Value of $\Delta_u^{L^2}$ \eqref{cont} under the variation of the mesh parameters. On the left variable $\delta_{\phi}$ and $\delta_{\psi}$, $h=0.054$; on the right, variable $\hat{\delta}_{u}$ and four different values of $h$, $\delta_{\phi}=\delta_{\psi}=0.5$.}
	\label{contMI}
\end{figure}

\section{Conclusions}
A novel approach for 3D-1D coupled problems has been proposed. The method derives from a mathematical formulation in proper functional spaces that allows the definition of a well posed trace operator from functions in the three dimensional space to one dimensional manifolds. The 1D problems are decoupled from the problem on the bulk 3D domain and two interface variables are introduced in this domain decomposition process, thus resulting in a three field formulation of the original problem. A cost functional is introduced and minimized to impose matching conditions at the interfaces. The method allows to enforce continuity conditions and flux balance at the interfaces between sub-problems on non-conforming meshes, thus strongly alleviating the mesh generation process. Indeed meshes on the various sub-domains can be independently generated. Numerical results on two simple test problem and on a more complex configuration show the viability of the proposed approach and are used to analyze the effect of method parameters on the condition number of the discrete problem and on solution accuracy. 

A formulation suitable for efficient resolution through iterative gradient-based schemes is also envisaged and should be further investigated to allow simulation on problems of high geometrical complexity.

\section*{References}
\bibliographystyle{elsarticle-num}
\bibliography{3D1DBib}

\begin{thebibliography}{10}
\expandafter\ifx\csname url\endcsname\relax
  \def\url#1{\texttt{#1}}\fi
\expandafter\ifx\csname urlprefix\endcsname\relax\def\urlprefix{URL }\fi
\expandafter\ifx\csname href\endcsname\relax
  \def\href#1#2{#2} \def\path#1{#1}\fi

\bibitem{Notaro2016}
D.~Notaro, L.~Cattaneo, L.~Formaggia, A.~Scotti, P.~Zunino, A Mixed Finite
  Element Method for Modeling the Fluid Exchange Between Microcirculation and
  Tissue Interstitium, Springer International Publishing, 2016, pp. 3--25.
\newblock \href {https://doi.org/10.1007/978-3-319-41246-7\_1}
  {\path{doi:10.1007/978-3-319-41246-7\_1}}.

\bibitem{Koppl2020}
T.~K\"{o}ppl, E.~Vidotto, B.~Wohlmuth, A 3d-1d coupled blood flow and oxygen
  transport model to generate microvascular networks, International Journal for
  Numerical Methods in Biomedical Engineering 36~(10) (2020) e3386.
\newblock \href {https://doi.org/10.1002/cnm.3386}
  {\path{doi:10.1002/cnm.3386}}.

\bibitem{Schroder2012}
N.~Schr\"{o}der, M.~Javaux, J.~Vanderborght, B.~Steffen, H.~Vereecken, Effect
  of root water and solute uptake on apparent soil dispersivity: A simulation
  study, Vadose Zone Journal 11~(3) (2012) vzj2012.0009.
\newblock \href {https://doi.org/https://doi.org/10.2136/vzj2012.0009}
  {\path{doi:https://doi.org/10.2136/vzj2012.0009}}.

\bibitem{Koch2018}
T.~Koch, K.~Heck, N.~Schr\"{o}der, H.~Class, R.~Helmig, A new simulation
  framework for soil–root interaction, evaporation, root growth, and solute
  transport, Vadose Zone Journal 17~(1) (2018) 170210.
\newblock \href {https://doi.org/10.2136/vzj2017.12.0210}
  {\path{doi:10.2136/vzj2017.12.0210}}.

\bibitem{Gjerde2018a}
I.~G. Gjerde, K.~Kumar, J.~M. Nordbotten, Well modelling by means of coupled
  1d-3d flow models, in: ECMOR XVI - 16th European Conference on the
  Mathematics of Oil Recovery, 2018.

\bibitem{Gjerde2020}
I.~Gjerde, K.~Kumar, J.~Nordbotten, A singularity removal method for coupled
  1d–3d flow models, Comput Geosci 24 (2020) 443--457.
\newblock \href {https://doi.org/10.1007/s10596-019-09899-4}
  {\path{doi:10.1007/s10596-019-09899-4}}.

\bibitem{CerLauZun2019}
D.~Cerroni, F.~Laurino, P.~Zunino, Mathematical analysis, finite element
  approximation and numerical solvers for the interaction of 3d reservoirs with
  1d wells, GEM - International Journal on Geomathematics 10~(1) (2019).

\bibitem{Steinbrecher2020}
I.~Steinbrecher, M.~Mayr, M.~Grill, J.~Kremheller, C.~Meier, A.~Popp, A
  mortar-type finite element approach for embedding 1d beams into 3d solid
  volumes, Comput Mech 66 (2020) 1377--1398.
\newblock \href {https://doi.org/10.1007/s00466-020-01907-0}
  {\path{doi:10.1007/s00466-020-01907-0}}.

\bibitem{Llau2016}
A.~Llau, L.~Jason, F.~Dufour, J.~Baroth, Finite element modelling of 1d steel
  components in reinforced and prestressed concrete structures, Engineering
  Structures 127 (2016) 769--783.
\newblock \href {https://doi.org/10.1016/j.engstruct.2016.09.023}
  {\path{doi:10.1016/j.engstruct.2016.09.023}}.

\bibitem{Dangelo2012}
C.~D'Angelo, Finite element approximation of elliptic problems with dirac
  measure terms in weighted spaces: applications to one- and three-dimensional
  coupled problems, SIAM J. Numer. Anal. 50~(1) (2012) 194 -- 215.

\bibitem{DangeloQuarteroni2008}
C.~D'Angelo, A.~Quarteroni, On the coupling of 1d and 3d diffusion-reaction
  equations. application to tissue perfusion problems, Math. Models Methods
  Appl. Sci. 18 (2008) 1481 -- 1504.

\bibitem{ErnGuermond}
A.~Ern, J.~Guermond, Theory and Practice of Finite Elements, Vol. 159, Appl.
  Mat. Sci, Springer-Verlag, New York, 2004.

\bibitem{Tornberg2004}
A.-K. Tornberg, B.~Engquist, Numerical approximations of singular source terms
  in differential equations, Journal of Computational Physics 200~(2) (2004)
  462--488.
\newblock \href {https://doi.org/10.1016/j.jcp.2004.04.011}
  {\path{doi:10.1016/j.jcp.2004.04.011}}.

\bibitem{Gjerde2019}
{Gjerde, Ingeborg G.}, {Kumar, Kundan}, {Nordbotten, Jan M.}, {Wohlmuth,
  Barbara}, Splitting method for elliptic equations with line sources, ESAIM:
  M2AN 53~(5) (2019) 1715--1739.
\newblock \href {https://doi.org/10.1051/m2an/2019027}
  {\path{doi:10.1051/m2an/2019027}}.

\bibitem{Koppl2018}
T.~K\"{o}ppl, E.~Vidotto, B.~Wohlmuth, P.~Zunino, Mathematical modeling,
  analysis and numerical approximation of second-order elliptic problems with
  inclusions, Mathematical Models and Methods in Applied Sciences 28~(05)
  (2018) 953--978.
\newblock \href {https://doi.org/10.1142/S0218202518500252}
  {\path{doi:10.1142/S0218202518500252}}.

\bibitem{Zunino2019}
F.~Laurino, P.~Zunino, Derivation and analysis of coupled pdes on manifolds
  with high dimensionality gap arising from topological model reduction.,
  ESAIM: M2AN 53~(6) (2019) 2047 -- 2080.

\bibitem{Brezzi}
F.~Brezzi, L.~Marini, A three-field domain decomposition method, Contemporary
  Mathematics 157 (1994).

\bibitem{NostroArxiv}
S.~Berrone, D.~Grappein, S.~Pieraccini, S.Scialò, A three-field based
  optimization formulation for flow simulations in networks of fractures on non
  conforming meshes, accepted for publication on SIAM J. Sci. Comput. (2019).
\newblock \href {http://arxiv.org/abs/1912.09744} {\path{arXiv:1912.09744}}.

\bibitem{BPSc}
S.~Berrone, S.~Pieraccini, S.~Scial{\`o}, An optimization approach for large
  scale simulations of discrete fracture network flows, J. Comput. Phys. 256
  (2014) 838--853.
\newblock \href {https://doi.org/10.1016/j.jcp.2013.09.028}
  {\path{doi:10.1016/j.jcp.2013.09.028}}.

\bibitem{BSV}
S.~Berrone, S.~Scial{\`o}, F.~Vicini, Parallel meshing, discretization and
  computation of flow in massive {D}iscrete {F}racture {N}etworks, SIAM J. Sci.
  Comput. 41~(4) (2019) C317--C338.
\newblock \href {https://doi.org/10.1137/18M1228736}
  {\path{doi:10.1137/18M1228736}}.

\bibitem{BDS}
S.~Berrone, A.~D'Auria, S.~Scial{\`o}, An optimization approach for flow
  simulations in poro-fractured media with complex geometries, Comput Geosci
  (2021).
\newblock \href {https://doi.org/10.1007/s10596-020-10029-8}
  {\path{doi:10.1007/s10596-020-10029-8}}.

\bibitem{Pestana2016}
J.~Pestana, T.~Rees, Null-space preconditioners for saddle point systems, SIAM
  Journal on Matrix Analysis and Applications 37~(3) (2016) 1103--1128.
\newblock \href {https://doi.org/10.1137/15M1021349}
  {\path{doi:10.1137/15M1021349}}.

\end{thebibliography}

\end{document}